\newcommand{\eq}{\triangleq}
\newcommand{\Prob}{\mathbf{Pr}}
\newcommand{\E}{\mathbf{E}}
\newcommand{\field}[1]{\mathbb{#1}} 
\newcommand{\R}{\field{R}}
\newcommand{\N}{\field{N}}
\newcommand{\K}{\mathcal{K}}
\newcommand{\hfs}{\hfill\ensuremath{\square}}
\newtheorem{theorem}{Theorem}[section]
\newtheorem{coro}[theorem]{Corollary}
\newtheorem{lem}[theorem]{Lemma}
\newtheorem{ex}{Example}[section]
\newtheorem{ass}{Assumption}
\newtheorem{defn}{Definition}
\title{Anytime Control using Input Sequences with Markovian Processor Availability}
\author{Daniel E.\ Quevedo,
\thanks{D. Quevedo is with the School of Electrical Engineering \& Computer Science, University of Newcastle. Email: {\tt dquevedo@ieee.org}. Research supported by Australian Research
  Council's Discovery Projects scheme (project DP0988601).
}
Wann-Jiun Ma,
\thanks{W. J. Ma and V. Gupta are with the Department of Electrical Engineering, University of Notre Dame. Email: {\tt wma1@nd.edu, vgupta2@nd.edu}. Research supported in part by NSF awards 0846631 and 0834771.
}
and Vijay Gupta
\thanks{A preliminary version of parts of this note was
  presented  at the 1st Australian Control Conference, Melbourne; see \cite{quegup11a}.
}
}
\begin{document}
\maketitle

\begin{abstract}
We study an anytime control algorithm for situations where the  processing
resources available for control are time-varying in an a priori unknown
fashion. Thus, at times,  processing resources are insufficient to calculate
control inputs. To address this issue, the algorithm calculates sequences of
tentative future control inputs whenever possible, which are then buffered for
possible future use.  We assume that the processor availability is correlated so
that  the number of control inputs calculated at any time step is described by a
Markov chain. Using a Lyapunov function based approach we derive
sufficient conditions for stochastic stability of the closed loop.  
\end{abstract}

\section{Introduction}
Recently, many works have appeared that consider the impact of limited or
time-varying processing power on control algorithms. Such problems arise
naturally in cyberphysical and embedded systems where the control algorithm may
be just one of many tasks being executed by the processor. Thus, McGovern and
Feron~\cite{MF98,MF99} considered the question of bounding the processing time
that is required to solve the optimization problem in model predictive control
to a specified accuracy. Henriksson \emph{et al}~\cite{ha04,hcaa02} studied the
trade-off inherent in solving the optimization problem exactly (thus, obtaining
the control input sequence more precisely) and in solving the problem more
often. Event-triggered and self-triggered control, and online sampling,
e.g.,~\cite{t07,wl07,vmb09,cvmc10} have also been proposed as a means to ensure
less demand on the processor on average by calculating the control input on
demand in a non-periodic fashion.  

In this note, we are interested in anytime control algorithms. Such algorithms calculate a coarse control input even with limited processing resources. As more processing resources become available, the input is refined. The process can be terminated at any time by the processor. The quality of control input is thus time-varying, but no control input is obtained only rarely. Various anytime algorithms for linear processors and controllers have been proposed in the literature~\cite{bb04,GFB07,g10}. For non-linear plants, we recently proposed anytime algorithms  based on computing sequences of potential (tentative) future control values~\cite{quegup12a}. At the instances when more processing power is available, a longer sequence is calculated. This provides a buffer against the time steps when the processor power is not enough to calculate an input. Since the control values in the sequence are calculated by reutilising already computed values, the algorithm does not assume a priori knowledge of processor availability.

However, with the exception of \cite{GFB07} and \cite{quegup12a}, the analysis
in these works largely considered the processor availability to be described by
an independent and identically distributed sequence. In particular,~\cite{quegup12a}
had a brief discussion when the processor availability sequence is described by
a (hidden)
Markov chain;   the memory arose through the concept of `processor
states' which are not directly related to how many control values can be
calculated.  In the current work, we replace this model by a more direct one,
where the processor availability for the control 
task, and hence the number of tentative control values that can be calculated at
each time step,  forms a Markov Chain. More importantly, we provide a new
analysis technique, that at least for a class of models, is less conservative than the technique
in~\cite{quegup12a}. Intuitively, the proposed technique considers the `average' case of
processor availability to analyze a random-time drift condition, as compared
to the `worst case' analysis in~\cite{quegup12a}.  Sufficient conditions for
stochastic stability with and without the anytime control algorithm are provided
and compared with the conditions in \cite{quegup12a}. We also analyze the
robustness of these conditions with respect to presence of process noise. A
preliminary version of parts of the present manuscript can be found
in~\cite{quegup11a}. 

\par The paper is organized as follows: In
Section~\ref{sec:prob_form}, we present the control design problem
studied. In Section~\ref{sec:sequ-based-anyt}, we revise the  anytime algorithm
of\cite{quegup12a} to be studied. Section~\ref{sec:analys-via-assoc} presents a novel model
for analyzing the resulting closed loop  when the processor availability is
Markovian. Section~\ref{sec:analysis} presents the stability analysis with this
model.  Section~\ref{sec:relat-prev-results} compares our results with those
in\cite{quegup12a}. Section~\ref{sec:notes-rubustness} provides  robust stability analysis in the presence of process noise. Numerical simulations are documented in
Section~\ref{sec:case-studies}.  
 Section~\ref{sec:conclusions} draws conclusions.

\paragraph*{Notation}
We write $\N$ for $\{1, 2, 3, \ldots\}$, $\N_0$ for $\N \cup \{0\}$ and $\N_{n}^{m}= \{n,n+1, \ldots,m\},$ for given integers $n\leq m$. $\R$ are the real numbers and
$\R_{\geq 0}$ the nonnegative real numbers. The $p\times p$ identity matrix is
denoted by $I_p$ and the $p\times q$ matrix of all ones is denoted by $I_{p\times q}$, whereas $0_p = 0 I_p$ and $\mathbf{0}_p$ is the  all-zeroes (column)
vector in $\R^p$. The notation
$\{x\}_{\K}$ stands for 
$\{x(k) \;\colon k \in \K\}$.
We  adopt the convention $\sum_{k=\ell_1}^{\ell_2}a_k = 0$  if $\ell_1 > \ell_2$
and irrespective of $a_k\in\R$. The superscript $^T$  refers to transpose. The
Euclidean norm of a vector $x$ is denoted by $|x|=\sqrt{x^Tx}$.
 A
function $\varphi\colon \R_{\geq 0}\to \R_{\geq 0}$ is of
\emph{class-}$\mathscr{K}_\infty$ ($\varphi \in \mathscr{K}_\infty$), if it is
continuous, zero at zero, strictly increasing, and  unbounded.
  The probability of event
 $\Omega$ is 
 $\Prob\{\Omega \}$ and the conditional probability of $\Omega$ given
  $\Gamma$ is $\Prob\{\Omega\,|\,\Gamma \}$. The  expected value of  $\nu$ given 
  $\Gamma$, is denoted by  $\E\{\nu  \,|\, \Gamma \}$; for the
  unconditional 
  expectation  we  write $\E\{\nu\}$. An $m\times n$ matrix $M$ whose $(i,j)$-th element is $m_{ij}$ is denoted by $M=\left[m_{ij}\right]_{m\times n}$.

\section{Control with Random  Processor Availability}
\label{sec:prob_form}
Consider a discrete-time non-linear plant that evolves as 
\begin{equation}
  \label{eq:15}
  x(k+1) = f(x(k),u(k)),\quad k\in\N_0,
\end{equation}
where the state $x(.)\in \R^n$ and the control input $u(.)\in \R^p.$ We assume that the origin is an equilibrium point of the plant, so that  $f(\mathbf{0}_n,\mathbf{0}_p)=\mathbf{0}_n$. The initial state $x(0)$ is arbitrary. 
Given the stochastic processor availability model that we assume (as described below), the plant can evolve in open loop for arbitrarily long times. For general non-linear plants, the state may thus assume a value such that no possible control sequence can stabilize the process. To prevent this eventuality, we  assume that~\eqref{eq:15} is globally controllable via state feedback. 
\begin{ass}
\label{ass:CLF}
There exist functions  $V\colon
\R^n\to\R_{\geq 0}$,  $\varphi_1, \varphi_2\in\mathscr{K}_\infty$,  a constant
$\rho \in [0,1)$, and a control policy $\kappa
\colon \R^n\to \R^p$,
 such that  for all $x\in\R^n,$
\begin{equation}
  \label{eq:3}
  \begin{split}
    \varphi_1(|x|)\leq V(x)&\leq \varphi_2(|x|),\\
    V(f(x,\kappa(x))) &\leq \rho V(x).
  \end{split}
\end{equation}
\end{ass} 
If the plant~(\ref{eq:15}) is considered to be obtained by sampling a
continuous-time plant, it is generally assumed that the control calculation can
be completed within a fixed (and small) time-delay, say $\delta\in
(0,T_s)$.\footnote{Recall that fixed delays can be easily incorporated into the
  model~(\ref{eq:15}) by aggregating  the previous plant input to the plant
  state, see also\cite{nilber98}. For ease  of exposition, we will  use the
  standard discrete-time notation as in~(\ref{eq:15}).} However, in networked
and embedded systems, the processing resources  (e.g., processor execution
times)  for control  may vary, and, at times, be insufficient to generate a
control input within the prescribed timeout $\delta$. This can lead to instances
where the plant evolves uncontrolled, even though there was an excess of
processing resource availability (beyond what is required to calculate a single
control input) at other time instants. The anytime control algorithm we propose
makes better use of this excess availability to safeguard against the time steps
at which the processing resource was not available at all. 

Before describing the anytime algorithm, we discuss a baseline algorithm that arises from a direct implementation of the control policy $\kappa$ used in Assumption~\ref{ass:CLF}. In this algorithm, the plant input which is applied during the  interval
 $[kT_s +\delta, (k+1)T_s +\delta)$ is given by
\begin{equation}
  \label{eq:4}
  u(k)=
  \begin{cases}
    \kappa(x(k)) &\text{if sufficient computational resources to evaluate $\kappa(x(k))$ are available}\\
    &\text{between times $kT_s$ and $kT_s+\delta$,}\\
    \mathbf{0}_p &\text{otherwise.}
  \end{cases}
\end{equation}
We shall assume that the controller requires processor time to
  carry out mathematical computations. However,
  simple operations at a bit level, such as writing data into buffers, shifting
  buffer contents and setting values to zero do not require processor
  time. Similarly, input-output operations, i.e., A/D and D/A conversion are
  triggered by external asynchronous loops with a real-time clock and do not
  require that the processor be available for control. As in regular
  discrete-time control, these external loops ensure that state measurements are
  available at the instants $\{kT_s\}_{k\in\N_0}$ and that the controller
  outputs (if available) are passed on to the plant actuators at times
  $\{kT_s+\delta\}_{k\in\N_0}$, where $\delta$ is fixed.

\section{Sequence-based Anytime Control Algorithm}
\label{sec:sequ-based-anyt} 
We use the same anytime control algorithm as proposed in~\cite{quegup12a} that calculates and buffers a  sequence of tentative
future plant  inputs at time intervals when the  
controller is provided with more processing resources than are needed to evaluate
the current control input. Denote the buffer
states via $\{b\}_{\N_0}$, where  $$b(k)=\left[\begin{array}{lcr}b_{1}^{T}(k)&\cdots&b_{\Lambda}^{T}(k)\end{array}\right]^{T}\in\R^{\Lambda  p},\quad k\in\N_0,$$
for a given value $\Lambda\in\{2,3,\dots\}$ and where each $b_j(k)\in\R^p$, $j\in\N_1^\Lambda$.
Also  
define a shift matrix  
   \begin{equation*}
   S\eq
\begin{bmatrix}
    0_p & I_p& 0_p &\hdotsfor{1} &0_p\\
    \vdots & \ddots & \ddots &\ddots  & \vdots\\
     0_p &  \dots      &  0_p &I_p  & 0_p\\
    0_p & \hdotsfor{2}       &  0_p & I_p\\
 0_p &\hdotsfor{3} &  0_p
\end{bmatrix}\in\R^{\Lambda  p\times \Lambda p}.
\end{equation*}
Fig.\ 1 presents the algorithm, which we denote by A$_1$.

\linespread{1.2}
\begin{figure}[h!]
\noindent\rule{\linewidth}{0.2mm}
\begin{description}
 \itemsep2pt
\item[Step 1]:  At time $t=0$,
\par  \hspace{1cm} \textsc{set}  $b(-1)\leftarrow \mathbf{0}_{\Lambda  p}$,
  $k\leftarrow 0$; 
  
\item[Step 2]: \label{step:timek}
  \textsc{if}  $t \geq k T_s$,  \textsc{then} 
\par  \hspace{1cm}  \textsc{input}
  $x(k)$;
\par    \hspace{1cm}  \textsc{set}  $\chi\leftarrow x(k)$,  $j\leftarrow 1$,
  $b(k)\leftarrow Sb(k-1)$;
\par  \hspace{1mm}  \textsc{end}

\item[Step 3]: \label{step:repeat}
  \textsc{while} {``sufficient processor time is available'' 
  and   time $t < (k+1) T_s$ and  $j\leq \Lambda$,}
  \par\hspace{1cm} {\textsc{evaluate} $u_j(k)=\kappa(\chi)$};
\par\hspace{1cm} \textsc{if} $j=1$, \textsc{then}
\par  \hspace{3cm} 
   \textsc{output} $u_1(k)$;
\par  \hspace{3cm}   \textsc{set}
 $b(k)\leftarrow\mathbf{0}_{\Lambda  p}$;
\par\hspace{1cm} \textsc{end}
     
  \par \hspace{1cm} \textsc{set} $b_j(k)\leftarrow u_j(k)$; 
  
  \par \hspace{1cm} \textsc{if} ``sufficient processor time is not available'' 
 or  $t \geq (k+1) T_s$, \textsc{then}

 \par\hspace{3cm}  \textsc{goto} Step 5; 

\par \hspace{1.15cm}\textsc{end}    

  \par \hspace{1cm} \textsc{set} $\chi \leftarrow  f(\chi,u_j(k))$,
  $j\leftarrow j+1$;
\par \hspace{1mm} \textsc{end}

\item[Step 4]: 
  \textsc{if} $j=1$, \textsc{then}
\par  \hspace{1cm}  \textsc{output} $b_1(k)$; 
\par \hspace{3mm}\textsc{end}

\item[Step 5]: 
  \textsc{set} $k \leftarrow k+1$ and \textsc{goto} Step 2;
\end{description}

\noindent\rule{\linewidth}{0.2mm}
\caption{Anytime algorithm A$_1$, adapted from \cite{quegup12a}.}
\label{alg:1}
\end{figure}
\linespread{1.5}
 
\par Note that the algorithm essentially amounts to a dynamic state 
feedback policy with internal state variable $b(k)$. Denote by
$N(k)\in\N_0^\Lambda$ the total number of iterations of the while-loop in Step~3 which are carried out during the
interval $t\in (kT_s,(k+1)T_s)$. 
This yields: 
\begin{equation}
  \label{eq:1}
  b(k)=
  \begin{cases}
    Sb(k-1) &\text{if $N(k)=0$,}\\
    \begin{bmatrix}
      \vec{u}(k)^T&
      (\mathbf{0}_{(\Lambda -N(k)) p})^T
    \end{bmatrix}^T
&\text{if $N(k)\geq 1$,}
  \end{cases}
\end{equation}
where 
\begin{equation*}
  \vec{u}(k) = 
  \begin{bmatrix}
    u_{1}(k)\\u_{2}(k)\\ \vdots\\ u_{N(k)}(k)
  \end{bmatrix}\in\R^{N(k)\cdot p}.
\end{equation*}
 The outcomes of the process $\{N\}_{\N_0}$ affect the resultant closed loop
performance  since they determine   how many 
values which stem from the tentative control sequences $\{\vec{u}(k-\ell)\}$,
$\ell \in\N_0$ are contained in the
buffer state $b(k)$.  We refer to this quantity  as the \emph{effective buffer
  length} (at time $k\in\N_0$),  denote it as 
$\lambda (k) \in\N_0^{\Lambda}$ 
 and note that with   initial state $\lambda(-1)=0,$  
\begin{equation}
  \label{eq:19}
  \lambda(k)=
  \begin{cases}
    N(k) &\text{if $N(k)\geq 1$},\\
    \max (\lambda(k-1)-1,0)  &\text{if $N(k)=0$}.
  \end{cases}
\end{equation}

\begin{ex}
\label{ex:one}
Suppose that $\Lambda = 4$ and that the processor availability is such that
$N(0) =4$, $N(1) =0$, $N(2)=1$, $N(3)=2$.
When using the anytime algorithm A$_1$, the buffer state at times $k\in\{0,1,2,3\}$ becomes:
\begin{equation*}
    \{b(0),b(1),b(2),b(3)\}=\left\{
   \begin{bmatrix}
    {u_0(0)}\\u_1(0)\\ u_2(0)\\ u_{3}(0)
 \end{bmatrix}\!,
\begin{bmatrix}
    {u_1(0)}\\ u_2(0)\\ u_{3}(0)\\ \mathbf{0}_{p}
 \end{bmatrix}\!,
\begin{bmatrix}
    {u_0(2)}\\ \mathbf{0}_{p}\\ \mathbf{0}_{p}\\ \mathbf{0}_{p} 
 \end{bmatrix}\!,
\begin{bmatrix}
    {u_0(3)} \\ u_1(3) \\  \mathbf{0}_{p}\\ \mathbf{0}_{p}
 \end{bmatrix}
\right\} 
\end{equation*}
which gives $\lambda(0)=4$, $\lambda(1)=3$, $\lambda(2)=1$, $\lambda(3)=2$, and the plant inputs
$u(0)=u_0(0)$, $u(1)= u_1(0)$, $u(2)=u_0(2)$, and $u(3)=u_0(3)$. On the other hand,  if the  baseline-algorithm
in~\eqref{eq:4} is used, then
$  u(0) = \kappa(x(0))$, $u(1)=\mathbf{0}_{p}$, $u(2)=\kappa(x(2))$ and
$u(3)=\kappa(x(3))\}$, i.e., at time $k=1$ 
the plant input is set to zero.
This  suggests that   Algorithm A$_1$ will 
outperform the baseline algorithm. 
\hfs
\end{ex}


\section{Markov Chain Model and Analysis}
\label{sec:analys-via-assoc}
In\cite{quegup12a} we studied Algorithm A$_1$   under the assumption that
$\{N\}_{\N_0}$ is  governed by an underlying correlated processor state
process. In this work, we examine an alternative model wherein $\{N\}_{\N_0}$ is
directly described by a finite Markov Chain~\cite{kemsne60}. As we shall see in
Section~\ref{sec:relat-prev-results}, the current model enables us to develop
sufficient conditions for stability, which are less conservative than those
in\cite{quegup12a}. 

\begin{ass}
\label{ass:iid}
  The process $\{N\}_{\N_0}$  is
  a homogeneous
 Markov Chain with initial state
$N(0)=0$ and an irreducible and aperiodic transition probability matrix  $ \mathcal{Q} = [q_{ij}]_{\N_0^\Lambda\times \N_0^\Lambda}$ where
 \begin{equation}
 \label{eq:7}
 q_{ij}=\Prob\{N(k+1)=j\,|\,N(k)=i\},\quad i,j\in \N_0^\Lambda.
 \end{equation}
\end{ass}
The above model  allows for correlations in processor availability.
Fig.~\ref{fig:transitions2} depicts the transition graph for $\{N\}_{\N_0}$ 
resulting from~\eqref{eq:7} for the case where $\Lambda =2$.

\begin{figure}[t]
\centering
 \includegraphics[width=.5\textwidth]{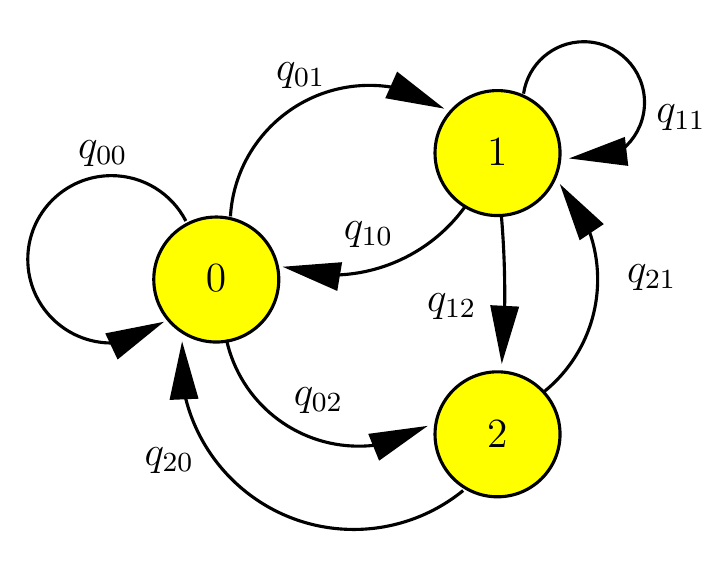}
	\caption{Transition graph of $N$ for $\Lambda =2$.}
	\label{fig:transitions2}
      \end{figure}

\subsection{Defining an aggregated process} 
We will  analyze the anytime control system through the aggregated
process $\{Z\}_{\N_0}$, where  each $$ Z(k)\eq (N(k),\lambda(k)), \quad k\in\N_0$$ 
belongs to the  set $\mathbb{S}\eq\{s_0,s_1,\dots, s_{2\Lambda-1}\}$, having elements   $s_i = (i,i),\forall i\in\N_{0}^\Lambda$ and $s_{\Lambda +j}= (0,j), \forall j \in\N_1^{\Lambda-1}.$
Clearly the  outcomes of $\{N\}_{\N_0}$ determine the
trajectory of $\{Z\}_{\N_0}$ and thereby determine whether the buffer contains
calculated control values or not.
An important  property is that, if Assumption~\ref{ass:iid} holds,
then $\{Z\}_{\N_0}$ is a  Markov Chain. The transition probabilities
 $$ p_{ij}=\Prob\{Z(k+1)=s_j\,|\,Z(k)=s_i\}, \quad (s_i,s_j)\in \mathbb{S}\times \mathbb{S}$$
and the associated transition matrix 
$  \mathcal{P}=[p_{ij}], {i,j\in\N_{0}^{2\Lambda -1}},$
 are determined by  the transition probabilities of
$\{N\}_{\N_0}$ as detailed in the 
 following lemma:
 \begin{lem}
   \label{lem:pij}
   Suppose that Assumption~\ref{ass:iid} holds, then 
   \begin{equation}
     \label{eq:8ab}
     \begin{split}
       &p_{00}=q_{00}, \; p_{10}=q_{10},\; p_{(\Lambda+1)0}=q_{00},\\
       &p_{ij}=q_{ij},\quad \forall (i,j)\in\N_0^\Lambda\times\N_1^\Lambda,\\
       &p_{(j+1)(\Lambda+j)}=q_{(j+1)0},\quad \forall j \in \N_1^{\Lambda-1},\\
       &p_{(\Lambda+m)(\Lambda+m-1)}=q_{00},\quad \forall m\in\N_{2}^{\Lambda-1},\\
       &p_{(\Lambda+k)l}=q_{0l}, \quad \forall (k,l)\in\N_1^{\Lambda-1}\times\N_1^\Lambda.
     \end{split}
   \end{equation}
 All other
transition probabilities in $\mathcal{P}$ are
identically zero. 
 \end{lem}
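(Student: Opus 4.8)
The plan is to prove the lemma by direct enumeration of the possible transitions, exploiting the fact that the dynamics of $\{Z\}_{\N_0}$ reduce cleanly to those of $\{N\}_{\N_0}$. The key structural observation is that, conditioned on the current aggregated state $Z(k)=s_i$, the next state $Z(k+1)=(N(k+1),\lambda(k+1))$ is determined entirely by the single random draw $N(k+1)$: the recursion~\eqref{eq:19} makes $\lambda(k+1)$ a deterministic function of $N(k+1)$ and $\lambda(k)$, and both $N(k)$ and $\lambda(k)$ are read off directly from the label $s_i$. Since, by Assumption~\ref{ass:iid}, the law of $N(k+1)$ depends on the past only through $N(k)$, this simultaneously confirms that $\{Z\}_{\N_0}$ is a homogeneous Markov Chain and reduces every $p_{ij}$ to a single entry of $\mathcal{Q}$ indexed by the $N$-components of source and target.

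First I would record the two elementary consequences of~\eqref{eq:19}, writing $i'$ for the $N$-component of the current state $s_i$. If $N(k+1)=n\geq 1$, then $\lambda(k+1)=n$, so $Z(k+1)=(n,n)=s_n$, an event of probability $q_{i'n}$. If instead $N(k+1)=0$, then $Z(k+1)=(0,\max(\lambda(k)-1,0))$, an event of probability $q_{i'0}$. Thus the only nontrivial bookkeeping is to translate the pair $(0,\max(\lambda(k)-1,0))$ back into the index of the corresponding state of $\mathbb{S}$.

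Next I would split the source states into the two families that make up $\mathbb{S}$. For a source $s_i=(i,i)$ with $i\in\N_0^\Lambda$, a target with $N(k+1)=j\geq 1$ lands on $s_j$ with probability $q_{ij}$, giving $p_{ij}=q_{ij}$ for all $(i,j)\in\N_0^\Lambda\times\N_1^\Lambda$; for a source $s_{\Lambda+k}=(0,k)$ with $k\in\N_1^{\Lambda-1}$, whose $N$-component is $0$, a target with $N(k+1)=l\geq 1$ lands on $s_l$ with probability $q_{0l}$, giving $p_{(\Lambda+k)l}=q_{0l}$. The remaining $N(k+1)=0$ transitions I would treat by tracking the decremented buffer length: from $s_i=(i,i)$ the target is $(0,\max(i-1,0))$, which collapses to $s_0$ when $i\in\{0,1\}$ (yielding $p_{00}=q_{00}$ and $p_{10}=q_{10}$) and equals $s_{\Lambda+i-1}$ when $i\geq 2$ (yielding $p_{(j+1)(\Lambda+j)}=q_{(j+1)0}$ on setting $j=i-1$); from $s_{\Lambda+k}=(0,k)$ the target is $(0,k-1)$, which is $s_0$ when $k=1$ (yielding $p_{(\Lambda+1)0}=q_{00}$) and $s_{\Lambda+k-1}$ when $k\geq 2$ (yielding $p_{(\Lambda+m)(\Lambda+m-1)}=q_{00}$ on setting $m=k$).

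I expect the only delicate point to be the boundary behaviour of the $\max(\lambda(k)-1,0)$ term: the $\lambda$-decrement collapses onto the single state $s_0$ rather than onto a nonexistent $(0,0)$-type index in the $s_{\Lambda+\cdot}$ family whenever $\lambda(k)$ reaches $0$, and this is precisely the origin of the special entries $p_{10}$ and $p_{(\Lambda+1)0}$. Finally, since the two source families partition $\mathbb{S}$ and the events $\{N(k+1)\geq 1\}$ and $\{N(k+1)=0\}$ exhaust all outcomes, every transition of positive probability has been accounted for exactly once; hence all entries of $\mathcal{P}$ not listed in~\eqref{eq:8ab} must vanish, which completes the proof.
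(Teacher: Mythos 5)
Your proof is correct and follows essentially the same route as the paper's: a direct case-by-case computation of the transitions using the recursion~\eqref{eq:19} and the Markov property of $\{N\}_{\N_0}$, with the targets read off by distinguishing $N(k+1)\geq 1$ from $N(k+1)=0$. Your organization (reducing everything to the single draw $N(k+1)$ first, then splitting source states into the two families of $\mathbb{S}$) is a slightly more systematic packaging of the same enumeration, and your exhaustiveness argument for the zero entries makes explicit what the paper dispatches in one line.
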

 \begin{proof}
 See Appendix~\ref{sec:proof-lemma}.  
 \end{proof}

\begin{ex}
Suppose that $\Lambda =3$. Then $\mathbb{S}=\{s_0,\dots,s_5\}$, where
$s_0=(0,0)$, $s_1=(1,1)$, $s_2=(2,2)$,
$s_3=(3,3)$, $s_4=(0,1)$, and $s_5=(0,2)$. The result~(\ref{eq:8ab}) then gives:
\begin{equation*}
\mathcal{P}=
\begin{bmatrix}
   q_{00}&q_{01}&q_{02}&q_{03}& 0&0\\
   q_{10}&q_{11}&q_{12}&q_{13}& 0&0\\
   0&q_{21}&q_{22}&q_{23}&q_{20}&0\\
   0&q_{31}&q_{32}&q_{33}&0&q_{30}\\
   q_{00}&q_{01}&q_{02}&q_{03}& 0&0\\
   0&q_{01}&q_{02}&q_{03}&q_{00}&0
\end{bmatrix}.
\end{equation*}
\end{ex}

\subsection{Distribution of the first return time}
Denote the times when $b(k)$ runs out of calculated control
values, 
i.e., when $Z(k)=s_0=(0,0)$ (equivalently, $\lambda(k)=0$),   via
$\mathcal{K}=\{k_i\}_{i\in\N_0}$, where 
$k_{0}=0$ (from Assumption~\ref{ass:iid}) and
$$  k_{i+1} = \inf \big\{ k\in\N \colon k>k_i,\quad  Z(k)=s_0\big\}, i\in\N_0.$$
We also describe the amount of time steps between  consecutive
elements  of $\mathcal{K}$ via $\Delta_i\in\N$, where:
$$ \Delta_i\eq k_{i+1}-k_i,\quad \forall (k_{i+1}, k_i) \in\mathcal{K} \times\mathcal{K}.$$
Thus, the process $\{\Delta_i\}_{i\in\N_0}$
corresponds to the
first return time of state $s_0$ and is therefore 
i.i.d. (see, e.g., \cite{kemsne60}). Now the 
transition matrix of 
$\{Z\}_{\N_0}$  can be partitioned
according to (see~(\ref{eq:8ab}))
\begin{equation}
\label{eq:28}
  \begin{split}
    \mathcal{P}&=
  \begin{bmatrix}
    q_{00} & \theta^T\\
    \mu &\overline{\mathcal{P}}
  \end{bmatrix},\quad\theta^T =
    \begin{bmatrix}
      q_{01}&\dots&q_{0\Lambda}&(\mathbf{0}_{\Lambda-1})^T
    \end{bmatrix},\quad 
    \overline{\mathcal{P}}=[p_{ij}], \quad i,j\in\N_{1}^{2\Lambda -1}\\
    \mu^T&=
    \begin{cases}
    \begin{bmatrix}
      q_{10}&(\mathbf{0}_{\Lambda-1})^T&q_{00}&(\mathbf{0}_{\Lambda-2})^T
    \end{bmatrix},&\text{if $\Lambda>2$}\vspace{1mm}\\
    \begin{bmatrix}
      q_{10}&0&q_{00}
    \end{bmatrix},&\text{if $\Lambda=2$.}
  \end{cases}
\end{split}
\end{equation}
Lemma~\ref{lem:L2} as proven in Appendix~\ref{sec:proof-lemma-1} characterizes the
distribution of $\{\Delta_i\}$.
\begin{lem}
\label{lem:L2}
 Suppose that Assumption~\ref{ass:iid} holds and consider  $\theta$, $\mu$ and
 $\overline{\mathcal{P}}$ as defined in~\eqref{eq:28}.
Then 
  \begin{equation}
    \label{eq:10}
     \Prob\{\Delta_i=j\}
     =
     \begin{cases}
       q_{00}&\text{if $j=1$,}\\
       \theta^T
       (\overline{\mathcal{P}})^{j-2}
       \mu&\text{if $j\geq 2$.}
     \end{cases}
   \end{equation}
   \end{lem}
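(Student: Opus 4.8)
The plan is to exploit the fact, already noted just before the lemma, that $\{\Delta_i\}_{i\in\N_0}$ is the sequence of first-return times of the state $s_0$ and is i.i.d.; hence it suffices to compute the distribution of a single first return, say the one beginning at an instant $k_i\in\mathcal{K}$ at which $Z(k_i)=s_0$. I would condition on the entire trajectory of $\{Z\}$ over $\{k_i,\dots,k_i+j\}$ and use the Markov property (Assumption~\ref{ass:iid}) to write $\Prob\{\Delta_i=j\}$ as a sum of products of one-step transition probabilities $p_{ij}$.

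First I would dispose of the case $j=1$. The event $\{\Delta_i=1\}$ is exactly $\{Z(k_i+1)=s_0\}$, whose conditional probability given $Z(k_i)=s_0$ is the one-step transition $p_{00}$, and $p_{00}=q_{00}$ by Lemma~\ref{lem:pij}. This yields the first line of~\eqref{eq:10}.

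For $j\geq 2$ I would decompose the first-return event into three stages: the chain must leave $s_0$ on the first step, avoid $s_0$ for the next $j-2$ interior steps, and re-enter $s_0$ on the final step. Formally,
\begin{equation*}
  \Prob\{\Delta_i=j\}
  = \sum p_{0a_1}\, p_{a_1a_2}\cdots p_{a_{j-2}a_{j-1}}\, p_{a_{j-1}0},
\end{equation*}
where the sum ranges over all $(a_1,\dots,a_{j-1})$ with each $a_\ell\in\N_1^{2\Lambda-1}$, so that none of the intermediate states equals $s_0$. I would then read off the three factors from the partition~\eqref{eq:28}: the row $(p_{0a_1})_{a_1}$ of first-step probabilities is $\theta^T$, the column $(p_{a_{j-1}0})_{a_{j-1}}$ of last-step probabilities is $\mu$, and the remaining chain of $j-2$ transitions among the non-$s_0$ states is governed by the submatrix $\overline{\mathcal{P}}$. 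Collapsing the sum into matrix--vector products gives $\theta^T(\overline{\mathcal{P}})^{j-2}\mu$, the second line of~\eqref{eq:10}; the boundary case $j=2$ uses the convention $(\overline{\mathcal{P}})^0=I$ and reduces to $\theta^T\mu$, i.e. $\sum_{a_1}p_{0a_1}p_{a_10}$.

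The only point requiring care --- and the main obstacle, such as it is --- is the bookkeeping in the exponent. A return of length $j$ comprises exactly $j$ one-step transitions; the first ($s_0\to$ non-$s_0$) is absorbed into $\theta^T$ and the last (non-$s_0\to s_0$) into $\mu$, leaving precisely $j-2$ interior transitions and hence the power $(\overline{\mathcal{P}})^{j-2}$. I would also observe that restricting the interior states to $\N_1^{2\Lambda-1}$ is automatically enforced by using $\overline{\mathcal{P}}$ in place of $\mathcal{P}$, so no separate taboo-probability argument is needed once the correct sub-blocks have been identified.
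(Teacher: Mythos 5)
Your proof is correct and follows essentially the same route as the paper's: both rest on the block partition~\eqref{eq:28} and decompose the first return into the first step out of $s_0$ (the row $\theta^T$), $j-2$ interior steps avoiding $s_0$ (powers of $\overline{\mathcal{P}}$), and the final step back into $s_0$ (the column $\mu$). The only cosmetic difference is that the paper organizes the interior portion as a recursion on the first-passage times $\nu_\ell$ of the non-$s_0$ states, iterating it to obtain $\overline{\mathcal{P}}^{j-1}\mu$ before premultiplying by $\theta^T$, whereas you collapse the sum over taboo paths directly into the matrix product---the two computations coincide term by term.
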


\begin{figure}[t]
	\centering
	\includegraphics[width=.6\textwidth]{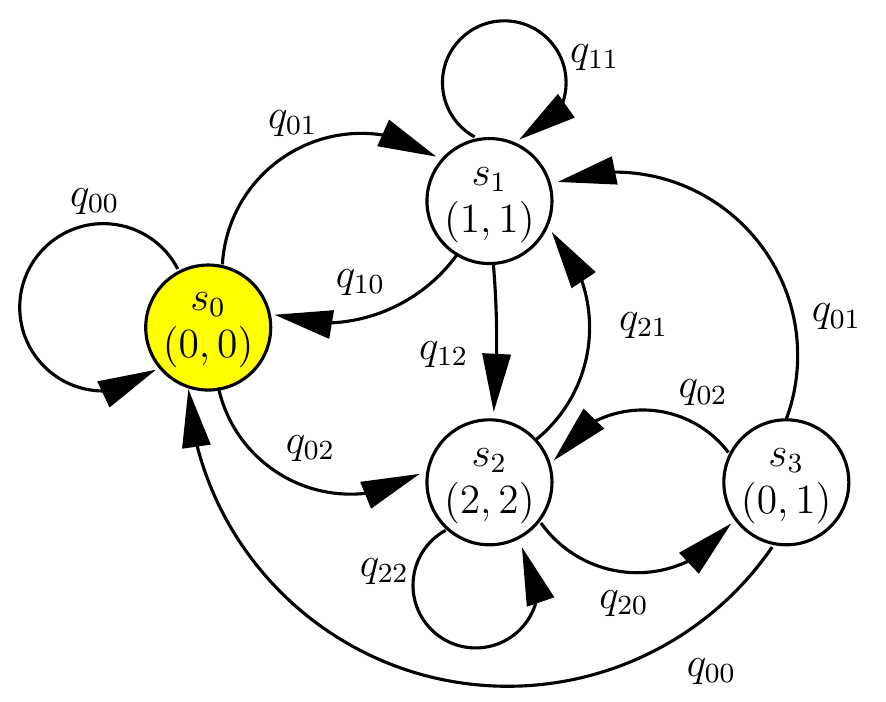}
	\caption{Transition graph of $Z=(N,\lambda)$ for $\Lambda =2$.}
	\label{fig:transitions}
\end{figure}

\begin{ex}
For
$\Lambda = 2$,~(\ref{eq:8ab}) provides the transition matrix
\begin{equation*}
 \mathcal{P}=
 \begin{bmatrix}
   p_{00}&p_{01}&p_{02}&p_{03}\\
   p_{10}& p_{11}&p_{12}&p_{13}\\
   p_{20} & p_{21}&p_{22}&p_{23}\\
   p_{30} & p_{31}&p_{32}&p_{33}
 \end{bmatrix}
 =
\begin{bmatrix}
   q_{00}&q_{01}&q_{02}&0\\
   q_{10}& q_{11}&q_{12}&0\\
   0 & q_{21}&q_{22}&q_{20}\\
   q_{00} & q_{01}&q_{02}&0
 \end{bmatrix}.
\end{equation*}
Thus, for all $j\geq 2$, the result in~(\ref{eq:10}) amounts to:
 \begin{equation*}
   \label{eq:10b}
    \Prob\{\Delta_i=j\}
    =\begin{bmatrix}
   q_{01}&q_{02}&0
 \end{bmatrix}
\begin{bmatrix}
     q_{11}&q_{12}&0\\
     q_{21}&q_{22}&q_{20}\\
     q_{01}&q_{02}&0
   \end{bmatrix}^{j-2}
   \begin{bmatrix}
     q_{10}\\0\\q_{00}
   \end{bmatrix}.
 \end{equation*}
Particular cases of the above  can be visualized by inspecting the graph in
Fig.~\ref{fig:transitions} as follows: The first return times $\{\Delta_i\}$ correspond to
cycles in which $s_0=(0,0)$ is the  starting and ending vertex, but not
otherwise 
contained along the path. Thus, for
$\Delta_i=2$, we have a unique cycle. It has vertices $\{s_0,s_1,s_0\}$, which gives
$\Prob\{\Delta_i=2\}=q_{01}q_{10}$. For $\Delta_i=3$ there are  
three cycles, namely
$\{s_0,s_1,s_1,s_0\}$, $\{s_0,s_2,s_1,s_0\}$, and
$\{s_0,s_2,s_3,s_0\}$. Consequently, we have $\Prob\{\Delta_i=3\} =
q_{01}q_{11}q_{10}+q_{02}q_{21}q_{10}+q_{02}q_{20}q_{00}$.\hfs
\end{ex}

\section{Stability Analysis}
\label{sec:analysis}
Since the processor availability is stochastic, the controller is random,
see~(\ref{eq:4}) and~(\ref{eq:1}). In
particular, if $N(k)=0$ then the plant evolves in open-loop at time $k$
(possibly using tentative plant inputs calculated at previous time-steps); if $\lambda(k)=0$, then the plant input is set to zero at that time.  
\par Various stability notions for
stochastic systems have been studied in the literature; see, e.g.,~\cite{jcfl91,k71}. We focus on the
following: 
\begin{defn}
  A dynamical system with state trajectory $\{x\}_{\N_0}$ is stochastically
  stable, if for some $\varphi \in\mathscr{K}_\infty$, the expected value $\sum_{k={0}}^{\infty}\E\big\{\varphi(|x(k)|)\big\} <\infty.$\hfs
\end{defn}


 Assumption~\ref{ass:bound_prob} stated below, bounds the rate of increase
of  $V$ in~(\ref{eq:3}), when~\eqref{eq:15} is run with zero input. It also imposes a (mild) restriction on
  the distribution of the  initial plant state.
\begin{ass}
  \label{ass:bound_prob}
  There exists $\alpha\in\R_{\geq 0}$
such that  
  \begin{equation}
    \label{eq:20}
     V({f}(x,\mathbf{0}_p))\leq\alpha V(x),\quad\forall x \in\R^n,
   \end{equation}
   and
   $\E\big\{\varphi_2(|x(0)|)\big\}<\infty$, where
$\varphi_2\in\mathscr{K}_\infty$ is as in~(\ref{eq:3}).\hfs
\end{ass}
It is worth noting that, since we allow for $\alpha >1$,
Assumption~\ref{ass:bound_prob} does not require that 
  the open-loop system $x(k+1) = {f}(x(k),\mathbf{0}_p)$  be globally
  asymptotically stable. Further discussion on potential  conservatism imposed by these
  assumptions can be found in Section IV-A of \cite{quegup12a}.  

\subsection{Stability with Algorithm A$_1$}
\label{sec:algorithm-a_1}
To study stochastic stability when Algorithm
$\textsc{A}_{1}$ is used, we will focus on the random instances where the buffer runs out of control inputs.
\begin{lem}
\label{lem:Markov}
  With Algorithm A$_1$, the plant state sequence \emph{at the time steps 
    $k_i\in\K$}, namely $\{x\}_{\K}$, 
is  Markovian.\hfs
\end{lem}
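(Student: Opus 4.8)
The plan is to exhibit a deterministic map $G$ and a sequence of ``excursion'' random variables $\{\omega_i\}$ such that $x(k_{i+1}) = G(x(k_i),\omega_i)$ with $\omega_i$ independent of $x(k_0),\dots,x(k_i)$; the Markov property of $\{x\}_{\K}$ then follows at once. The crucial structural observation that makes this possible is a \emph{regeneration} property of the buffer: since $S$ shifts the buffer contents up and pads a zero block at the bottom, a short induction on \eqref{eq:1}--\eqref{eq:19} shows that $\lambda(k)=0$ forces $b(k)=\mathbf{0}_{\Lambda p}$. Hence at every $k_i\in\K$ (where $Z(k_i)=s_0$, i.e.\ $\lambda(k_i)=0$) the buffer is completely cleared of previously computed control values, and the applied input is $u(k_i)=\mathbf{0}_p$.

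First I would define the excursion $\omega_i\eq (N(k_i),N(k_i+1),\dots,N(k_{i+1}-1))$, which in particular encodes the return time $\Delta_i=k_{i+1}-k_i$. I would then argue that the inputs $u(k_i),\dots,u(k_{i+1}-1)$ applied during $[k_i,k_{i+1})$ are a deterministic function of $x(k_i)$ and $\omega_i$ only. Indeed, because $b(k_i)=\mathbf{0}_{\Lambda p}$, every control value that can enter the buffer during this interval is freshly computed by the while-loop of Step~3, i.e.\ obtained as $\kappa$ evaluated along a predicted trajectory of \eqref{eq:15} whose seed state is an actual plant state $x(k)$, $k_i\le k<k_{i+1}$; and each such $x(k)$ is in turn obtained from $x(k_i)$ by iterating $f$ with the inputs applied earlier in the same interval. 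No quantity computed strictly before $k_i$ enters these recursions. Composing the dynamics \eqref{eq:15} over the window then yields $x(k_{i+1})=f(\cdots f(x(k_i),u(k_i))\cdots,u(k_{i+1}-1))=G(x(k_i),\omega_i)$ for a well-defined $G$.

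It remains to control the randomness. The times $k_i$ are the successive return times of the Markov chain $\{Z\}_{\N_0}$ to the state $s_0$, hence stopping times, and by the strong Markov property, conditioned on $Z(k_i)=s_0$ the post-$k_i$ trajectory $\{Z(k_i+m)\}_{m\ge0}$ is a copy of $\{Z\}$ started at $s_0$ and is independent of the history $\sigma\big(Z(0),\dots,Z(k_i)\big)$. Since $\omega_i$ is a measurable functional of this post-$k_i$ trajectory, the excursions $\{\omega_i\}_{i\in\N_0}$ are i.i.d.\ (the same regenerative structure that already makes $\{\Delta_i\}$ i.i.d.\ in Lemma~\ref{lem:L2}) and each $\omega_i$ is independent of that history. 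Because $x(0)$ is independent of $\{N\}_{\N_0}$ and each $x(k_j)$, $j\le i$, is a function of $x(0)$ and of $\{N(k)\}_{k<k_i}$, the variable $\omega_i$ is independent of $\big(x(k_0),\dots,x(k_i)\big)$. Therefore, for any measurable set $A$,
\begin{equation*}
  \Prob\big\{x(k_{i+1})\in A \,\big|\, x(k_0),\dots,x(k_i)\big\}
  = \Prob\big\{G(y,\omega_i)\in A\big\}\big|_{y=x(k_i)},
\end{equation*}
which depends on the past only through $x(k_i)$; this is exactly the Markov property, and time-homogeneity follows from the $\{\omega_i\}$ being identically distributed.

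I expect the main obstacle to be the careful justification of the deterministic representation $x(k_{i+1})=G(x(k_i),\omega_i)$ --- specifically, proving the regeneration fact $\lambda(k_i)=0\Rightarrow b(k_i)=\mathbf{0}_{\Lambda p}$ and then tracking, through the branching logic of Algorithm~A$_1$, that every input applied on $[k_i,k_{i+1})$ is built only from $x(k_i)$ and the realized processor pattern $\omega_i$, with no leakage from pre-$k_i$ computations. Once this functional decoupling is in place, the probabilistic conclusion via the strong Markov property is routine.
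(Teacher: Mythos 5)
Your proposal is correct and follows essentially the same route as the paper's proof: the key regeneration observation ($\lambda(k_i)=0$ forces $b(k_i)=\mathbf{0}_{\Lambda p}$ and $u(k_i)=\mathbf{0}_p$, so $x(k_{i+1})$ depends only on $x(k_i)$ and the processor realizations over the excursion), followed by an appeal to the Markov property of $\{N\}_{\N_0}$. The paper compresses this into three lines, whereas you make explicit the deterministic map $G$, the i.i.d.\ excursion variables, and the strong Markov property --- a more rigorous rendering of the same argument, not a different one.
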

\begin{proof}
  It follows from the definition of $k_{i}$ that $\forall k_i\in\K$ we have
  $  u(k_i) =\mathbf{0}_p,$
  $ b(k_i)=\mathbf{0}_{\Lambda p}$, 
  $\lambda(k_i) =  N(k_i)=0$. Thus, the plant state at
  time $k_{i+1}$  depends only on $x(k_i)$ and
  $\{N(k_i+1),N(k_i+2),\dots,N(k_{i+1}-1)\}$. The result follows
 from the Markovian property of $\{N\}_{\N_0}$. 
\end{proof}

Based on the results of Section~\ref{sec:analys-via-assoc} and
Lemma~\ref{lem:Markov}, stochastic  stability of the control system can  be 
analyzed by using a stochastic Lyapunov function approach as follows:
\begin{lem}
  \label{lemma:anytime_inter}
  Suppose that Assumptions~\ref{ass:CLF} to~\ref{ass:bound_prob} hold and
  consider $k_0,k_1\in \K$. We then have
  \begin{equation}
    \label{eq:6}
    \E\big\{V(x({k_{1}}))\,\big|\,x(k_{0})=\chi\big\}\\
 \leq \Omega V(\chi),\quad 
 \forall \chi\in \R^n,
\end{equation}
where\footnote{Note that, since $\rho\in[0,1)$, $\Omega$ is bounded.} 
\begin{equation}
\label{eq:13}
  \Omega\eq\alpha \sum_{j\in\N} \Prob\{ \Delta_i =j\} \rho^{j -1}.
\end{equation}
\end{lem}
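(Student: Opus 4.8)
The plan is to establish a \emph{pathwise} bound $V(x(k_1)) \le \alpha \rho^{\Delta_i - 1} V(x(k_0))$ valid for every realization of the processor-availability trajectory, and then to take expectations using the return-time distribution from Lemma~\ref{lem:L2}. The starting point is the content of Lemma~\ref{lem:Markov}: since $k_0,k_1\in\K$, both are instants at which the buffer is empty, so $N(k_0)=\lambda(k_0)=0$ and $u(k_0)=\mathbf{0}_p$. Thus the plant evolves open-loop over the first step, and Assumption~\ref{ass:bound_prob}, i.e.~\eqref{eq:20}, gives $V(x(k_0+1))\le \alpha V(x(k_0))$.

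First I would argue that every step strictly inside the interval $(k_0,k_1)$ realizes the contraction~\eqref{eq:3}. By definition of $\K$ the state $s_0=(0,0)$ is not revisited for $k_0<k<k_1$; since $\lambda(k)=0$ holds only at $s_0$, we have $\lambda(k)\ge 1$ there, so the applied input $u(k)=b_1(k)$ is always a \emph{calculated} tentative value rather than $\mathbf{0}_p$. The crucial observation is that, in the noise-free dynamics~\eqref{eq:15}, the model predictions used to generate the buffered sequence coincide with the true state trajectory: whenever $N(k)\ge 1$ a fresh sequence is computed from the current state with $u_1(k)=\kappa(x(k))$, and whenever $N(k)=0$ the next buffered entry $u_m(k^\star)$ was generated as $\kappa$ evaluated at a predicted state $f(\cdots)$ which equals the actual $x(k)$, because every intermediate input applied since that sequence was formed was exactly the predicted one. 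In either case $u(k)=\kappa(x(k))$, so $V(x(k+1))\le \rho V(x(k))$. Iterating over the $\Delta_i-1$ transitions $k\in\{k_0+1,\dots,k_1-1\}$ yields $V(x(k_1))\le \rho^{\Delta_i-1}V(x(k_0+1))$, and combining with the open-loop first step gives the pathwise bound $V(x(k_1))\le \alpha\rho^{\Delta_i-1}V(x(k_0))$, the case $\Delta_i=1$ being recovered through the empty product $\rho^0=1$.

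Finally I would take the conditional expectation. Because the chain $\{N\}_{\N_0}$ is autonomous and $N(k_0)=0$, the first return time $\Delta_i$ depends only on $N(k_0+1),N(k_0+2),\dots$ and is therefore independent of the conditioning event $x(k_0)=\chi$; moreover its law is supplied by Lemma~\ref{lem:L2}, i.e.~\eqref{eq:10}. Hence $\E\{V(x(k_1))\mid x(k_0)=\chi\}\le \alpha V(\chi)\,\E\{\rho^{\Delta_i-1}\}=\alpha V(\chi)\sum_{j\in\N}\Prob\{\Delta_i=j\}\rho^{j-1}=\Omega V(\chi)$, which is precisely~\eqref{eq:6} with $\Omega$ as in~\eqref{eq:13}.

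I expect the main obstacle to be the middle step: making rigorous the ``prediction equals reality'' claim that guarantees each buffered input equals $\kappa(x(k))$. This requires a careful induction over the buffer dynamics~\eqref{eq:1} and~\eqref{eq:19}, tracking which fresh computation each currently-applied entry originates from and verifying that no zero input or mismatched prediction intervenes between its computation and its application anywhere in $(k_0,k_1)$. Everything else — the open-loop factor $\alpha$ contributed by the first step and the expectation over the i.i.d.\ return times — is routine.
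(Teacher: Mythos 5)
Your proposal is correct and follows essentially the same route as the paper's proof: the paper likewise establishes $V(x(k_0+1))\leq\alpha V(x(k_0))$ for the single open-loop step and $V(x(k+1))\leq\rho V(x(k))$ for every $k\notin\K$, concludes $\E\{V(x(k_1))\,|\,x(k_0)=\chi,\Delta_0=j\}\leq\alpha\rho^{j-1}V(\chi)$, and then applies the law of total expectation over the i.i.d.\ return times. The ``prediction equals reality'' induction you flag as the main obstacle is precisely what the paper leaves implicit (it simply invokes Lemma~\ref{lem:Markov} and Assumptions~\ref{ass:CLF} and~\ref{ass:bound_prob} to assert the contraction at steps where buffered inputs are applied), so your sketch is, if anything, more explicit on that point.
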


\begin{proof}
By Lemma~\ref{lem:Markov} and Assumptions~\ref{ass:CLF}
and~\ref{ass:bound_prob}, we have 
\begin{equation}
\label{eq:2}
  \begin{split}
    V(x(k_0+1))&\leq \alpha  V(x(k_0)),\\
    V(x(k+1))&\leq \rho  V(x(k)),\quad \forall k \not \in \K.
  \end{split}
\end{equation}
Thus, $\E\big\{V(x({k_{1}}))\,\big|\,x(k_{0})=\chi,\Delta_0=j\big\}
 \leq \alpha\rho^{j -1} V(\chi)$,
for all $\chi\in \R^n$. The
result~\eqref{eq:6} follows by using the law of total expectation and the fact
that $\{\Delta_i\}$ is i.i.d.  
\end{proof}

Although Lemma~\ref{lemma:anytime_inter} considers only the
instants $k_0$ and $k_1$, the bound in~\eqref{eq:6}
can be used to conclude about stochastic 
stability  for all
$k\in\N_0$  
\begin{theorem}
  \label{theorem:a1_stability}
  Suppose that
  Assumptions~\ref{ass:CLF}--\ref{ass:bound_prob} hold and that 
$\Omega  <1$. Then the plant state trajectory when controlled with Algorithm
A$_1$ is stochastically stable with the bound:
\begin{equation*}
    \sum_{k={0}}^{\infty}\E\big\{\varphi_1(|x(k)|)\big\} 
    <\frac{1+\alpha-\rho}{(1-\Omega)(1-\rho)}
    \E\big\{\varphi_2(|x(0)|)\big\}.
  \end{equation*}
\end{theorem}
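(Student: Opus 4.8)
The plan is to bound the full series $\sum_{k=0}^\infty \E\{\varphi_1(|x(k)|)\}$ by first invoking the lower estimate $\varphi_1(|x|)\le V(x)$ from Assumption~\ref{ass:CLF} to reduce everything to $\sum_{k=0}^\infty \E\{V(x(k))\}$, and then exploiting the renewal structure induced by the return times $\K=\{k_i\}$. The key observation is that the consecutive cycles $[k_i,k_{i+1})$, each of random length $\Delta_i$, partition $\N_0$, so that $\sum_{k=0}^\infty V(x(k))=\sum_{i=0}^\infty S_i$ with $S_i\eq\sum_{m=0}^{\Delta_i-1}V(x(k_i+m))$. Because $\{Z\}_{\N_0}$ is a finite irreducible aperiodic chain, $s_0$ is recurrent and each $\Delta_i$ is finite almost surely, so the partition is legitimate; nonnegativity of $V$ then lets me freely interchange sums and expectations (Tonelli), giving $\sum_{k=0}^\infty \E\{V(x(k))\}=\sum_{i=0}^\infty \E\{S_i\}$.

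Next I would bound a single cycle. Along $[k_i,k_{i+1})$ the pathwise estimates~\eqref{eq:2} from the proof of Lemma~\ref{lemma:anytime_inter} give one growth step $V(x(k_i+1))\le\alpha V(x(k_i))$ at the return time, followed by contraction steps, whence $V(x(k_i+m))\le\alpha\rho^{m-1}V(x(k_i))$ for $1\le m\le\Delta_i-1$. Hence, on the event $\{\Delta_i=j\}$, one has pathwise $S_i\le V(x(k_i))\big(1+\alpha\tfrac{1-\rho^{j-1}}{1-\rho}\big)$. Conditioning on $x(k_i)=\chi$ and using that, given $N(k_i)=0$, the cycle length $\Delta_i$ is independent of $x(k_i)$ (Markov property, Lemma~\ref{lem:Markov}), I would average over $\Delta_i$ with the law of Lemma~\ref{lem:L2}. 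Recognising $\alpha\sum_j\Prob\{\Delta_i=j\}\rho^{j-1}=\Omega$ yields $\E\{S_i\mid x(k_i)=\chi\}\le\big(1+\tfrac{\alpha-\Omega}{1-\rho}\big)V(\chi)\le\tfrac{1+\alpha-\rho}{1-\rho}V(\chi)$, where the final step merely discards the nonnegative term $\Omega/(1-\rho)$.

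To finish, I would iterate the one-cycle drift of Lemma~\ref{lemma:anytime_inter} through the tower property to obtain $\E\{V(x(k_i))\}\le\Omega^i\,\E\{V(x(0))\}$, and combine this with the per-cycle estimate via $\E\{S_i\}\le\tfrac{1+\alpha-\rho}{1-\rho}\E\{V(x(k_i))\}$ to get $\sum_{i=0}^\infty\E\{S_i\}\le\tfrac{1+\alpha-\rho}{1-\rho}\sum_{i=0}^\infty\Omega^i\,\E\{V(x(0))\}$. Since $\Omega<1$ the geometric series sums to $1/(1-\Omega)$, and using $V(x(0))\le\varphi_2(|x(0)|)$ (Assumption~\ref{ass:CLF}) delivers exactly the claimed bound; the strict inequality is inherited from discarding the strictly positive $\Omega/(1-\rho)$ contribution, noting $\Omega>0$ because $\Delta_i\ge1$ forces the $j=1$ term $\Prob\{\Delta_i=1\}\rho^0$ into the sum.

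The main obstacle I expect is justifying the factorisation of the cycle-sum expectation, i.e.\ that $\Delta_i$ and $x(k_i)$ decouple. This rests on the fact that at each return time the system resets ($N(k_i)=0$, $b(k_i)=\mathbf{0}_{\Lambda p}$), so by the Markov property the future processor availability governing $\Delta_i$ is conditionally independent of the accumulated state $x(k_i)$; the delicate point is that the contraction bounds hold per realisation while the \emph{number} of terms summed in $S_i$ is itself the random quantity $\Delta_i$. A secondary technicality is confirming $\sum_j\Prob\{\Delta_i=j\}=1$ (recurrence of $s_0$), so that the cycles genuinely exhaust $\N_0$ and no residual mass escapes to infinite return times.
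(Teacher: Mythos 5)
Your proposal is correct and follows essentially the same route as the paper's own proof: decompose time into the renewal cycles $[k_i,k_{i+1})$, bound each cycle sum by $\frac{1+\alpha-\rho}{1-\rho}V(x(k_i))$ via the pathwise estimates~\eqref{eq:2} and averaging over $\Delta_i$, obtain the geometric decay $\E\{V(x(k_i))\,|\,x(0)=\chi_0\}\leq\Omega^i V(\chi_0)$ from Lemma~\ref{lemma:anytime_inter}, and sum the geometric series using $\Omega<1$ together with $\varphi_1(|x|)\leq V(x)\leq\varphi_2(|x|)$. The differences are cosmetic rather than structural: you iterate Lemma~\ref{lemma:anytime_inter} directly through the tower property where the paper invokes Kushner's stochastic Lyapunov theorem, you justify the interchange of sums by Tonelli where the paper lets $k_{j+1}\to\infty$, and your intermediate per-cycle constant $1+(\alpha-\Omega)/(1-\rho)$ is slightly tighter before being relaxed to the paper's $\frac{1+\alpha-\rho}{1-\rho}$.
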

\begin{proof}
From~(\ref{eq:3}) and Lemmas~\ref{lem:Markov} and~\ref{lemma:anytime_inter}, it
follows that if $\Omega  <1$, then $V(x({k_{i}}))$ is a
  stochastic Lyapunov function for  $\{x\}_{\K}$. Therefore,~\cite[Chapter
  8.4.2, Theorem 2]{k71} implies exponential stability at instants
  $k_{i}\in\mathcal{K}$, i.e., for all
    $(i,\chi_0)\in\N_0\times\R^n$,
  $$  \E\{V(x({k_{i}}))\,|\,x(k_{0})=\chi_0\}
  \leq\Omega^{i}V(\chi_0).$$
    For the time steps $k\in \N \,\backslash \, \mathcal{K}$, i.e., where
   calculated control values are applied,~(\ref{eq:2}) gives
  \begin{equation*}
    \begin{split}
 \E&\Bigg\{\sum_{k=k_{i}}^{k_{i+1}-1}V(x({k}))\,\bigg|\,x(k_{i})=\chi_i,\Delta_i=j\geq
 2\Bigg\}\leq \Bigg(1+ \alpha\sum_{l=0}^{j-2} \rho^l\Bigg) V(\chi_i)
 \leq \bigg(1
 +\frac{\alpha}{1-\rho}\bigg)V(\chi_i).
\end{split}
\end{equation*}
The latter bound holds for all $j\geq 2$. Now, using the law of total
expectation, we obtain
$$  \E\Bigg\{\sum_{k=k_{i}}^{k_{i+1}-1}V(x({k}))\,\bigg|\,x(k_{i})=\chi_i\Bigg\}\leq 
 \frac{1+\alpha-\rho}{1-\rho}V(\chi_i).$$
Taking conditional expectation $\E\{\, \cdot\,|\, x(k_0)=\chi_0\}$ on both sides, defining $\beta \eq  (1+\alpha-\rho)/(1-\rho)$  and using the Markovian property of $\{x\}_{\K}$ yields
  \begin{equation*}
\begin{split}
 & \E\Bigg\{ \E\Bigg\{\sum_{k=k_{i}}^{k_{i+1}-1}V(x({k}))\,\bigg|\, x(k_{i})=\chi_i\Bigg\}
    \,\Bigg|\, x(k_0)=\chi_0\Bigg\} \\
    &= \E\Bigg\{\sum_{k=k_{i}}^{k_{i+1}-1}V(x({k}))\,\bigg|\,
    x(k_{0})=\chi_0\Bigg\}
    \leq   \beta \E\big\{V(x({k_{i}}))\,\big|\,x(k_0)=\chi_0\big\}
    \leq \beta \Omega^iV(\chi_0).
  \end{split}
\end{equation*}
Thus,
 $$   \E\Bigg\{\sum_{k={k_0}}^{k_{j+1}-1}V(x({k}))\,\bigg|x(k_0)=\chi_0\Bigg\}
 \leq \beta\sum_{i=0}^j\Omega^iV(\chi_0).$$
Now let  $k_{j+1}\to\infty$ and recall that $k_0=0$ to obtain
$$\sum_{k=0}^{\infty}
  \E\big\{V(x(k))\,\big|\,x(0)=\chi_0\big\}\leq
  \frac{\beta}{1-\Omega} 
  V(\chi_0).$$
The result now follows by using~(\ref{eq:3}), Assumption~\ref{ass:bound_prob}
and taking 
expectation with respect to the distribution of $x(0)$.
\end{proof}

Theorem~\ref{theorem:a1_stability} establishes sufficient conditions for
stochastic stability of the control loop when Algorithm A$_1$ is used and
processor availability is Markovian.  The quantity $\Omega$ involves
 the contraction factor of the baseline controller
$\kappa$,
see~(\ref{eq:3}), the bound on 
the rate of increase of $V$  when the plant input is zero,
see~(\ref{eq:20}), and the  
distribution of $\{\Delta_i\}$, $i\in\N_0$ which was characterised in Lemma~\ref{lem:L2}.
In Section~\ref{sec:relat-prev-results}, we will
relate Theorem~\ref{theorem:a1_stability} to the relevant result
in \cite{quegup12a}.   Before doing so, we will first investigate the baseline 
algorithm. 

\subsection{Stability with the Baseline Algorithm}
\label{sec:stab-with-basel}
Sufficient conditions for stochastic stability  when the
baseline algorithm 
in~(\ref{eq:4}) is used can be established by proceeding in a similar manner as
was done for Algorithm A$_1$.  
Here, we note that the baseline controller is
characterised via:
\begin{equation}
  \label{eq:4c}
  u(k)=
  \begin{cases}
    \kappa(x(k)) &\text{if $N(k)\in \N_1^{\Lambda}$,}\\
    \mathbf{0}_p &\text{if $N(k)=0$}.
  \end{cases}
\end{equation}
Denote the time steps
where $N(k)=0$ as $\mathcal{T}=\{t_i\}$ 
where
$$  t_{i+1} = \inf \big\{ k\in\N \colon k>t_i,\quad  N(k)=0\big\}, i\in\N_0,$$
 with $t_0=0$. Further, we introduce the process
$\{\tau_i\}_{i\in\N_0}$ consisting of the times between
consecutive elements of $\mathcal{T}$ via the relation
$$  \tau_i=t_{i+1}-t_i\in\N,\quad (t_i,t_{i+1})\in\mathcal{T}\times\mathcal{T}.$$
Thus, $\{\tau_i\}_{i\in\N_0}$
  are the first return times to state $0$ of the Markov Chain
  $\{N\}_{\N_0}$, and are therefore i.i.d.  Fig.~\ref{fig:transitions2} can be used to
visualize  $\{\tau_i\}_{i\in\N_0}$ for the case  $\Lambda=2$. This should be
contrasted with how $\{\Delta_i\}_{i\in\N_0}$ is illustrated in Fig.~\ref{fig:transitions}. 
 
\par By adapting the proof of Lemma~\ref{lem:L2}, we can characterize the distribution of $\{\tau_i\}_{i\in\N_0}$ as follows:
\begin{lem}
\label{lem:stab-with-base}
 Suppose that Assumption~\ref{ass:iid} holds. Then,
 $\Prob\{\tau_i=1\}=q_{00}$ and, for $j\geq 2$,  
  \begin{equation*}
      \Prob\{\tau_i=j\}
     =
     \begin{bmatrix}
       q_{01}&\dots&q_{0\Lambda} 
     \end{bmatrix}
     \begin{bmatrix}
    q_{11}&\dots&q_{1\Lambda}\\
    \vdots&\ddots&\vdots\\
    q_{\Lambda 1} &\dots&q_{\Lambda\Lambda}
  \end{bmatrix}^{j-2}
  \begin{bmatrix}
    q_{10}\\ \vdots \\ q_{\Lambda 0}
  \end{bmatrix}\!.
  \end{equation*}
\end{lem}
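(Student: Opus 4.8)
The plan is to mirror the first-passage argument of Lemma~\ref{lem:L2}, but applied directly to the chain $\{N\}_{\N_0}$ and its transition matrix $\mathcal{Q}$ rather than to the aggregated chain $\{Z\}_{\N_0}$ and $\mathcal{P}$. Since $\{N\}_{\N_0}$ is homogeneous and $N(t_i)=0$ at every $t_i\in\mathcal{T}$, the Markov property implies that each $\tau_i$ has the distribution of the first return time to state $0$ from $N(0)=0$ (this is also why the $\{\tau_i\}$ are i.i.d., as already noted). It therefore suffices to compute $\Prob\{\tau=j\}$ for $\tau\eq\inf\{k\in\N\colon N(k)=0\}$ conditioned on $N(0)=0$. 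First I would partition $\mathcal{Q}$ by isolating state $0$ from the remaining states $\N_1^\Lambda$,
$$\mathcal{Q}=\begin{bmatrix} q_{00} & r^T\\ c & R\end{bmatrix},\quad r^T=\begin{bmatrix}q_{01}&\dots&q_{0\Lambda}\end{bmatrix},\quad c=\begin{bmatrix}q_{10}&\dots&q_{\Lambda 0}\end{bmatrix}^T,\quad R=[q_{il}]_{i,l\in\N_1^\Lambda},$$
so that $r^T$, $R$ and $c$ coincide with the row vector, matrix and column vector appearing in the statement.

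The case $j=1$ is immediate, since the chain returns to $0$ in a single step with probability $q_{00}$. For $j\geq2$, the key step is to expand the first-return event into its sample paths. By the definition of a first return time, $\{\tau=j\}$ holds precisely when the chain follows some path $0\to i_1\to\cdots\to i_{j-1}\to 0$ whose intermediate states $i_1,\dots,i_{j-1}$ all lie in $\N_1^\Lambda$, i.e., the chain avoids $0$ up to time $j$. Summing the corresponding path probabilities and grouping the intermediate transitions as a taboo probability gives
$$\Prob\{\tau=j\}=\sum_{i_1,\dots,i_{j-1}\in\N_1^\Lambda} q_{0i_1}\,q_{i_1i_2}\cdots q_{i_{j-2}i_{j-1}}\,q_{i_{j-1}0}=r^T R^{\,j-2} c,$$
which is exactly the claimed expression.

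I expect the only real obstacle to be the index bookkeeping that yields the power $R^{j-2}$: the path contributing to $\{\tau=j\}$ comprises $j$ transitions, of which the first ($0\to i_1$) is carried by $r^T$ and the last ($i_{j-1}\to 0$) by $c$, leaving exactly $j-2$ transitions among the states of $\N_1^\Lambda$, and hence $R^{j-2}$ rather than $R^{j-1}$. Everything else is a routine restriction of the Chapman--Kolmogorov relation to the sub-block $R$; finiteness of $\tau$ (needed only for the distribution to be proper) follows from irreducibility and aperiodicity in Assumption~\ref{ass:iid}, but is not required to establish the per-$j$ formula.
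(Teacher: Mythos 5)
Your proof is correct and takes essentially the paper's intended route: the paper proves this lemma simply by ``adapting the proof of Lemma~\ref{lem:L2},'' i.e., partitioning the transition matrix to isolate state $0$ and factoring a first return of length $j$ into one entering transition ($r^T$), $j-2$ taboo transitions among $\N_1^\Lambda$ ($R^{j-2}$), and one returning transition ($c$), exactly as you do. The only cosmetic difference is that the paper's Lemma~\ref{lem:L2} argument packages the taboo-path count as a backward recursion on first-passage probabilities solved by matrix powers, whereas you write the same quantity as an explicit sum over paths avoiding state $0$ --- these are the same computation, unrolled.
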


\begin{theorem}
  \label{thm:baseline}
Suppose that
  Assumptions~\ref{ass:CLF}--\ref{ass:bound_prob} hold and that the baseline
  algorithm in~(\ref{eq:4c}) is used. If
  \begin{equation}
  \label{eq:85b}
\Theta\eq \alpha \sum_{j\in\N} \Prob\{ \tau_i =j\} \rho^{j -1}  <1, 
\end{equation}
 then  the control loop is stochastically stable. In particular, 
\begin{equation*}
    \label{eq:33b}
    \sum_{k={0}}^{\infty}\E\big\{\varphi_1(|x(k)|)\big\} 
    <\frac{1+\alpha-\rho}{(1-\Theta)(1-\rho)}
    \E\big\{\varphi_2(|x(0)|)\big\}. 
  \end{equation*}
\end{theorem}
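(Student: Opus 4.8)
The plan is to mirror the proof of Theorem~\ref{theorem:a1_stability}, with the return times $\{\Delta_i\}$ to the aggregated state $s_0$ replaced throughout by the return times $\{\tau_i\}$ to state $0$ of $\{N\}_{\N_0}$, and $\Omega$ replaced by $\Theta$. The conceptual simplification compared to Algorithm A$_1$ is that, in the absence of any buffer, the applied input is zero \emph{precisely} when $N(k)=0$, cf.~\eqref{eq:4c}; hence the instants at which open-loop evolution occurs coincide exactly with $\mathcal{T}=\{t_i\}$, and the full plant state at each such instant is simply $x(t_i)$. First I would establish the per-cycle drift inequality $\E\{V(x(t_1))\,|\,x(t_0)=\chi\}\leq \Theta V(\chi)$, the direct analogue of Lemma~\ref{lemma:anytime_inter}.

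To obtain this, observe that at $k=t_i$ we have $N(t_i)=0$, so~\eqref{eq:4c} gives input $\mathbf{0}_p$ and Assumption~\ref{ass:bound_prob} yields $V(x(t_i+1))\leq \alpha V(x(t_i))$; whereas for every $k\in\{t_i+1,\dots,t_{i+1}-1\}$ we have $N(k)\in\N_1^\Lambda$, so $u(k)=\kappa(x(k))$ and~\eqref{eq:3} gives $V(x(k+1))\leq \rho V(x(k))$. Composing one expansive step with the $\tau_i-1$ contractive ones shows that, conditioned on $\tau_i=j$, the deterministic bound $V(x(t_{i+1}))\leq \alpha\rho^{j-1}V(x(t_i))$ holds. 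Since $\{\tau_i\}$ is i.i.d., the law of total expectation then reproduces exactly the factor $\Theta$ defined in~\eqref{eq:85b}.

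The remaining steps carry over unchanged. Markovianity of $\{x\}_{\mathcal{T}}$ holds because $x(t_i+1)=f(x(t_i),\mathbf{0}_p)$ depends only on $x(t_i)$, and the evolution up to $t_{i+1}$ depends only on $x(t_i)$ and $\{N(t_i+1),\dots,N(t_{i+1}-1)\}$, so the Markov property of $\{N\}_{\N_0}$ transfers. With $\Theta<1$, the per-cycle drift makes $V(x(t_i))$ a stochastic Lyapunov function for $\{x\}_{\mathcal{T}}$, and~\cite[Chapter 8.4.2, Theorem 2]{k71} yields $\E\{V(x(t_i))\,|\,x(t_0)=\chi_0\}\leq \Theta^i V(\chi_0)$. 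Bounding the intermediate sum $\sum_{k=t_i}^{t_{i+1}-1}V(x(k))$ by the same geometric series as before produces the constant $\beta=(1+\alpha-\rho)/(1-\rho)$; summing $\beta\Theta^i$ over $i\in\N_0$, then passing through~\eqref{eq:3} and Assumption~\ref{ass:bound_prob}, gives the asserted bound.

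I do not expect a genuine obstacle here, as the baseline case is structurally simpler than the already-treated Algorithm A$_1$. The only point demanding care is the per-cycle bookkeeping: one must confirm that exactly one step per cycle (namely $k=t_i$) contributes the factor $\alpha$ while the other $\tau_i-1$ steps are contractive, so that the drift $\alpha\rho^{\tau_i-1}$ has precisely the form entering $\Theta$. This is immediate from the definitions of $\mathcal{T}$ and $\tau_i$, and is in fact cleaner than in the A$_1$ analysis, where the buffer forced the introduction of the aggregated chain $\{Z\}_{\N_0}$ and its return times $\{\Delta_i\}$.
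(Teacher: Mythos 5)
Your proposal is correct and follows essentially the same route as the paper, whose own proof of Theorem~\ref{thm:baseline} consists precisely of the instruction to adapt the argument of Theorem~\ref{theorem:a1_stability} with $\{\tau_i\}$ in place of $\{\Delta_i\}$ and $\Theta$ in place of $\Omega$, after noting that $V(x(t_i))$ is a stochastic Lyapunov function for $\{x\}_{\mathcal{T}}$. You simply spell out the per-cycle bookkeeping (one $\alpha$-step at $k=t_i$, followed by $\tau_i-1$ contractive $\rho$-steps) that the paper leaves implicit.
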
 

\begin{proof}
  By
  adapting the above ideas, 
  it can be
  shown that $V(x(t_i))$ is a stochastic Lyapunov function for the Markov process
  $\{x\}_{\mathcal{T}}$. The remainder of the proof
   then parallels that of Theorem~\ref{theorem:a1_stability}, but using
   $\{\tau_i\}_{i\in\N_0}$ instead of
   $\{\Delta_i\}_{i\in\N_0}$.
\end{proof}

\begin{ex}
 With $\Lambda=2$, Lemma~\ref{lem:stab-with-base} gives
  that
  \begin{equation*}
    \Prob\{\tau_i=j\}=
    \begin{cases}
      q_{00}&\text{if $j=1$,}\\
      q_{01} q_{11} ^{j-2} q_{10} &\text{if $j\geq 2$.}
    \end{cases}
  \end{equation*}
  in which case the sufficient condition~(\ref{eq:85b}) reduces to
  \begin{equation*}
    \begin{split}
      \Theta&=\alpha q_{00} + (1-q_{00})(1-q_{11}) \alpha \rho \sum_{j\geq 2}
      q_{11}
      ^{j-2} \rho^{j-2}=\alpha q_{00} + \alpha \rho\frac{(1-q_{00})(1-q_{11})}{1-\rho q_{11}}\\
      &=\frac{\alpha q_{00}(1-\rho q_{11})+\alpha \rho
        (1-q_{00})(1-q_{11})}{1-\rho q_{11}}=\frac{\alpha q_{00}(1-\rho q_{11})+\alpha \rho
        (1-q_{00})(1-q_{11})}{1-\rho q_{11}} <1.
    \end{split}
  \end{equation*}
\end{ex}

\section{Relationship to Previous Stability Results }
\label{sec:relat-prev-results}
In Section VII of\cite{quegup12a} we examined Algorithm A$_1$   using a model 
for the processor availability that allows for correlations in
$\{N\}_{\N_0}$, by introducing a \emph{processor state
process} $\{g\}_{\N_0}$ with values in $\N_1^G$,
  $G\in\N.$ The process is described by an irreducible aperiodic Markov Chain
  with transition  matrix $Q'=\left[q'_{ij}\right]_{G\times G}$
with 
$$ q'_{ij} = \Prob\{ g(k+1) = j\,|\, g(k) = i\}.$$ The realizations of
  $\{g\}_{\N_0}$ determine $\{N\}_{\N_0}$ 
 as per
\begin{equation}
  \label{eq:40b}
  \begin{split}
    \Prob&\{N(k)=l\,|\, g(k)=\varsigma\} = p'_{l|\varsigma},\quad \forall
    (l,\varsigma)\in\N_0^\Lambda \times \N_1^G,
  \end{split}
\end{equation}
with given probabilities $p'_{l|\varsigma}$. Clearly, our model in Assumption~\ref{ass:iid} can be described using this
structure by setting the processor state to satisfy
$  g(k)=N(k)+1, \forall k\in\N_0,$
in which case $G=\Lambda+1$, and $p'_{l|\varsigma}=1$ if  $\varsigma=l+1  $ and is 0 otherwise.
In particular, we have $p'_{0|\varsigma}\in\{0,1\}$ with
  $p'_{0|\varsigma}=1$ if and only if $\varsigma = 1$.
Note that $Q' = \mathcal{Q}$ since 
\begin{equation*}
  q'_{ij} = \Prob\{ g(k+1) = j\,|\, g(k) = i\}=\Prob\{ N(k+1) +1 = j\,|\,
  N(k)+1   =   i\} = q_{i-1,j-1}.
\end{equation*}
 
\subsection{The Baseline Algorithm}
\label{sec:baseline-algorithm-1}
 Given the above, for the model in Assumption~\ref{ass:iid}, Theorem 4 of
\cite{quegup12a} establishes that if  $\alpha<1$, then the closed loop
system when using the baseline algorithm   is stochastically
stable. This is in contrast to  the results in our
current work, which also allow for $\alpha >1$. Thus,  Theorem~\ref{thm:baseline} 
provides a sufficient condition which can also be used for open-loop unstable
plant models. This follows directly from
Lemma~\ref{lem:stab-with-base} and by the fact that $\rho \in[0,1)$, so that
\begin{equation*}
  \sum_{j\in\N} \Prob\{ \tau_i =j\} \rho^{j -1} =   q_{00} + 
    \sum_{j= 2}^{\infty} \Prob\{ \tau_i =j\} \rho^{j -1}
    \leq   q_{00} + \rho 
    \sum_{j\geq 2} \Prob\{ \tau_i =j\} = q_{00} + \rho (1-q_{00})<1.
 \end{equation*}

\subsection{The Anytime Algorithm}
Application of Theorem 5
of\cite{quegup12a} to our present model gives the following sufficient condition
for stochastic stability, that is proven in Appendix~\ref{sec:proof-corollary}.

\begin{coro}\label{c14}
Let
  Assumptions~\ref{ass:CLF}--\ref{ass:bound_prob} hold. Suppose that $\alpha
  q_{00}<1$ and that $\Upsilon_\varsigma <1,\forall \varsigma
  \in \N_2^{\Lambda+1},$ where
  \begin{equation}
   \label{eq:9}
   \Upsilon_\varsigma  \triangleq\frac{q_{(\varsigma-1)0}(1- q_{00})}{1-q_{00}\rho}\Bigg(
   \frac{(\alpha-\rho)}{1-q_{00}\alpha}q_{00}^{\varsigma-2}\rho^{\varsigma-1}+\rho^{2}\Bigg)
   +\rho (1-q_{(\varsigma-1)0}).
 \end{equation}
Then the plant state trajectory when controlled with Algorithm
 A$_1$ is stochastically stable.\hfs
\end{coro}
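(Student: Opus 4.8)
The plan is to obtain Corollary~\ref{c14} as a direct specialization of Theorem~5 of~\cite{quegup12a}, exploiting the correspondence already set up in this section. Recall that the present Markov model is the special case of the processor-state model in which $g(k)=N(k)+1$, so that $G=\Lambda+1$, $Q'=\mathcal{Q}$ via $q'_{ij}=q_{i-1,j-1}$, and the conditional law of $N$ given $g$ degenerates to a point mass, $p'_{l|\varsigma}=1$ precisely when $\varsigma=l+1$. First I would write out the sufficient condition of Theorem~5 in its general form, which bounds, for each processor state $\varsigma\in\N_1^{G}$, a per-state contraction factor expressed through $q'_{ij}$, the probabilities $p'_{l|\varsigma}$, and the constants $\alpha,\rho$ of Assumptions~\ref{ass:CLF} and~\ref{ass:bound_prob}.

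The second step is to substitute the degenerate conditional distribution and the index shift $g=N+1$ into that general factor. Because each processor state $\varsigma$ now corresponds to the single value $N=\varsigma-1$, all conditional expectations over $N$ collapse, and the per-state factor simplifies considerably. The state $\varsigma=1$ (i.e.\ $N=0$) is the recurrent instant at which the buffer is empty and the plant runs open-loop; here the general condition reduces to the requirement that the open-loop runs contract in the mean, which is precisely $\alpha q_{00}<1$ and which also guarantees convergence of the geometric sum producing the denominator $1-q_{00}\alpha$. For each remaining state $\varsigma\in\N_2^{\Lambda+1}$ (i.e.\ $N=\varsigma-1\geq 1$) the factor splits into a contribution weighted by the probability $q_{(\varsigma-1)0}$ of the buffer eventually emptying out and a contribution $\rho(1-q_{(\varsigma-1)0})$ corresponding to the complementary event in which calculated inputs keep being applied; collecting the resulting geometric sums in $\rho$ and $q_{00}$ yields exactly the expression $\Upsilon_\varsigma$ in~\eqref{eq:9}.

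The bulk of the work is therefore algebraic bookkeeping: correctly propagating the shift $g=N+1$ through every index, and summing the nested geometric series in $\rho^{\varsigma-1}$ and $q_{00}^{\varsigma-2}$ that arise from iterating the open-loop bound $V(f(x,\mathbf{0}_p))\leq\alpha V(x)$ while the buffer drains. The main obstacle is that this derivation hinges on the precise statement of Theorem~5 of~\cite{quegup12a}, which is external to the present excerpt; once its condition is transcribed, verifying that the substitution reduces it term-by-term to $\alpha q_{00}<1$ together with $\Upsilon_\varsigma<1$ for all $\varsigma\in\N_2^{\Lambda+1}$ is routine but error-prone, chiefly because the degenerate conditional law must be handled carefully to confirm it contributes no extra terms and because the off-by-one in the index shift propagates into the exponents of $q_{00}$ and $\rho$.
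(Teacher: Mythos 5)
Your proposal follows essentially the same route as the paper's own proof: the paper likewise takes the per-state factor $\Upsilon_\varsigma$ from Lemma~4/Theorem~5 of~\cite{quegup12a} (written there in matrix form with $\bar q_\varsigma$, $\bar Q$, $\bar p$), substitutes the degenerate law $p'_{l|\varsigma}$ and the shift $g=N+1$, and collapses the resulting series using the rank-one identity $\bar Q^l = (q_{00})^{l-1}\bar Q$, with $\alpha q_{00}<1$ ensuring invertibility of $I-\alpha\bar Q$, exactly the role you assign it. The only difference is that the paper carries out the bookkeeping you describe explicitly via this matrix computation, so your plan is a correct outline of the same argument.
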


In general, comparing the above sufficient condition with the one presented in 
Theorem~\ref{theorem:a1_stability} is difficult. To elucidate the situation, in
the remainder of this section we will focus on processor availability models
where all transition probabilities are equal, i.e.,
\begin{equation}
  \label{eq:45}
  q_{ij}=1/(\Lambda+1),\quad  \forall i,j\in\N_0^\Lambda.
\end{equation}
For this class of models,  the result in~(\ref{eq:10}) can be written as:
\begin{equation}
  \label{eq:332}
  \Prob\{\Delta_i=j\}
     =(1/(\Lambda+1))^{j}\sum_{k=1}^{\Lambda}\sum_{l=1}^{\Lambda}v_{kl}\lambda_k^{j-2},\quad j\geq 2,
\end{equation} 
where $\{\lambda_k\}$, $k\in\N_1^\Lambda$, are the non-zero eigenvalues of
$\underline{\mathcal{P}}\eq(1/(\Lambda+1))^{2-j}\overline{P}$, $\{v_{k}\}$  are the
corresponding eigenvectors such that $\mu^Tv_k\neq 0$, and $v_{kl}$ denotes the $l$-th element
of $v_k$.  
Expressions~\eqref{eq:13} and~(\ref{eq:332}) yield
\begin{equation*}
  \begin{split}
    \Omega 
    &= (\alpha/(\Lambda+1)) + \alpha\rho \sum_{j\geq
      2}(1/(\Lambda+1))^{j}\sum_{k=1}^{\Lambda}\sum_{l=1}^{\Lambda}v_{kl}(\lambda_k\rho)^{j-2}
    = (\alpha/(\Lambda+1)) \bar f_\Lambda (\rho),
  \end{split}
\end{equation*}
where\footnote{Notice that the first row of $\underline{\mathcal{P}}$ is
  $[I_{1\times \Lambda}~\mathbf{0}_{\Lambda-1}^T]$, hence
  $\sum_{l=1}^{\Lambda}v_{kl}=\lambda_kv_{k1}$. From (\ref{eq:8ab}) and
  Gershgorin circle theorem, we know that $|\lambda_k|\leq \Lambda+1$. Since $|\rho|
  <1$, we have $|\lambda_k\rho/(\Lambda+1)|<1$, $\forall k\in\N_{1}^\Lambda$.}   
\begin{equation*}
  \label{eq:42}
  \bar f_\Lambda (\rho) \eq
  1+\frac{\rho}{\Lambda+1}\Bigg(\sum_{k=1}^{\Lambda}\frac{1}{1-\lambda_k\rho/(\Lambda+1)}\sum_{l=1}^{\Lambda}v_{kl}\Bigg)
  = 1+ \rho \sum_{k=1}^{\Lambda}\frac{\lambda_kv_{k1}}{\Lambda+1-\lambda_k\rho}. 
\end{equation*}
Therefore, $\alpha \leq
({\Lambda+1})/{\bar{f}_\Lambda(\rho)}\Leftrightarrow\Omega \leq 1$.

\par On the other hand, with $q_{00}\alpha<1$,~\eqref{eq:9} yields
\begin{equation*}
  \begin{split}
    \Upsilon_2&= \frac{\rho q_{10}(1- q_{00})}{1-q_{00}\rho}\Bigg(
    \frac{(\alpha-\rho)}{1-q_{00}\alpha} +\rho\Bigg) +\rho (1-q_{10})
    =\rho+\frac{\rho (\alpha-1) q_{10} }{1-q_{00}\alpha} 
=\frac{\rho\Lambda}{\Lambda+1-\alpha},\quad
    \forall \Lambda \geq 2.
  \end{split}
\end{equation*}
Thus $\alpha \leq \Lambda+1-\Lambda\rho \Leftrightarrow \Upsilon_2 \leq 1
$. Recalling that $\alpha q_{00}<1$, $  q_{ij}=1/(\Lambda+1),~  \forall i,j\in\N_0^\Lambda$ and
$\rho<1$, it follows that $ \Upsilon_2\geq \Upsilon_\varsigma$, for all $\varsigma\in\N_2^{\Lambda+1}$.

The above analysis leads to the following characterization on cases where the stability condition developed in this work is less conservative than the one presented in  \cite{quegup12a}.

\begin{coro}\label{c15}
Consider a processor availability model of the form~(\ref{eq:45})  and suppose that
$\alpha<\Lambda+1$. If $${\bar{f}_\Lambda(\rho)}( \Lambda+1-\Lambda\rho)  <
\Lambda+1,$$ then the
sufficient condition for stability in Theorem 5 of\cite{quegup12a} is more
conservative than the one derived in Theorem~\ref{theorem:a1_stability} of the present work. \hfs
\end{coro}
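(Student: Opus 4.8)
The plan is to translate each of the two sufficient conditions into an explicit upper bound on $\alpha$ valid for the uniform model~(\ref{eq:45}), and then to observe that the corollary's hypothesis is exactly the statement that the bound coming from Theorem~\ref{theorem:a1_stability} is the larger of the two. So the result is genuinely a corollary: the two nontrivial ingredients are already in hand from the preceding paragraphs, and what remains is to assemble them and compare thresholds.

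First I would record the present work's stability region. Theorem~\ref{theorem:a1_stability} certifies stability whenever $\Omega<1$, and using the identity $\Omega=(\alpha/(\Lambda+1))\bar{f}_\Lambda(\rho)$ already derived for model~(\ref{eq:45}), this is equivalent to $\alpha<(\Lambda+1)/\bar{f}_\Lambda(\rho)$. Before dividing, I would note that $\bar{f}_\Lambda(\rho)\geq 1>0$: indeed $\bar{f}_\Lambda(\rho)=(\Lambda+1)\sum_{j\in\N}\Prob\{\Delta_i=j\}\rho^{j-1}$, and discarding the nonnegative terms with $j\geq 2$ leaves the $j=1$ term $(\Lambda+1)q_{00}=1$. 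This positivity is what legitimises the division and fixes the direction of the inequalities, so it is worth stating up front.

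Next I would reduce the \cite{quegup12a} condition. Corollary~\ref{c14} certifies stability whenever $\alpha q_{00}<1$ and $\Upsilon_\varsigma<1$ for all $\varsigma\in\N_2^{\Lambda+1}$. Under model~(\ref{eq:45}) I would invoke the two facts already established in this section, namely $\Upsilon_2=\rho\Lambda/(\Lambda+1-\alpha)$ and $\Upsilon_2\geq\Upsilon_\varsigma$ for every $\varsigma$, so the whole family of inequalities collapses to $\Upsilon_2<1$. Since $\alpha q_{00}=\alpha/(\Lambda+1)<1$ (the standing hypothesis $\alpha<\Lambda+1$) gives $\Lambda+1-\alpha>0$, we have $\Upsilon_2<1\Leftrightarrow\alpha<\Lambda+1-\Lambda\rho$, and because $\Lambda+1-\Lambda\rho\leq\Lambda+1$ this single inequality is binding. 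Hence the \cite{quegup12a} condition reduces to $\alpha<\Lambda+1-\Lambda\rho$.

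Finally I would compare the two thresholds. Dividing the hypothesis $\bar{f}_\Lambda(\rho)(\Lambda+1-\Lambda\rho)<\Lambda+1$ through by $\bar{f}_\Lambda(\rho)>0$ gives $\Lambda+1-\Lambda\rho<(\Lambda+1)/\bar{f}_\Lambda(\rho)$, i.e.\ the \cite{quegup12a} threshold is strictly smaller than the Theorem~\ref{theorem:a1_stability} threshold. Consequently every $\alpha$ admitted by \cite{quegup12a} (those with $\alpha<\Lambda+1-\Lambda\rho$) is also admitted here, whereas the whole interval $[\Lambda+1-\Lambda\rho,\,(\Lambda+1)/\bar{f}_\Lambda(\rho))$ is admitted by the present condition but not by \cite{quegup12a}, so the latter is strictly more conservative. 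There is no real analytic obstacle remaining; the only points needing care are the sign bookkeeping ($\bar{f}_\Lambda(\rho)>0$ and $\Lambda+1-\alpha>0$) and matching the word \emph{conservative} to the correct direction of the threshold inequality.
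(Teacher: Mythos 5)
Your proposal is correct and takes essentially the same route as the paper: the paper's (implicit) proof likewise combines the identity $\Omega = (\alpha/(\Lambda+1))\,\bar f_\Lambda(\rho)$, the computation $\Upsilon_2 = \rho\Lambda/(\Lambda+1-\alpha)$ together with the domination $\Upsilon_2 \geq \Upsilon_\varsigma$ established just before the corollary, and then reads the hypothesis $\bar f_\Lambda(\rho)(\Lambda+1-\Lambda\rho) < \Lambda+1$ as saying the threshold $\Lambda+1-\Lambda\rho$ lies strictly below $(\Lambda+1)/\bar f_\Lambda(\rho)$. Your additional sign bookkeeping ($\bar f_\Lambda(\rho)\geq 1$ via the $j=1$ term, and $\Lambda+1-\alpha>0$ from the standing hypothesis) only makes explicit steps the paper leaves implicit.
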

 Additional comparisons are provided in Section~\ref{sec:case-studies}.

\begin{ex}
\label{ex:better} For $\Lambda=2$,  the result in~(\ref{eq:10}) amounts to:
\begin{equation}
  \label{eq:31}
  \Prob\{\Delta_i=j\}
     =(1/3)^{j}
     \begin{bmatrix}
    1&1&0
  \end{bmatrix}
 \begin{bmatrix}
      1&1&0\\
      1&1&1\\
      1&1&0
    \end{bmatrix}^{j-2}
    \begin{bmatrix}
      1\\0\\1
    \end{bmatrix},\quad j\geq 2.
\end{equation}
By decomposing   $[1\;0\;1]^T$ into the eigenvectors of the matrix above, we obtain
\begin{equation*}
  \begin{split}
     &\begin{bmatrix}
    1&1&0
  \end{bmatrix}\begin{bmatrix}
      1&1&0\\
      1&1&1\\
      1&1&0
    \end{bmatrix}^{j-2}
    \begin{bmatrix}
      1\\0\\1
    \end{bmatrix} =\begin{bmatrix}
    1&1&0
  \end{bmatrix} \begin{bmatrix}
      1&1&0\\
      1&1&1\\
      1&1&0
    \end{bmatrix}^{j-2}
    \left(\begin{bmatrix}
      1/2\\\sqrt{2}/2\\1/2
    \end{bmatrix}+\begin{bmatrix}
      1/2\\ -\sqrt{2}/2 \\1/2
    \end{bmatrix} \right)\\
  &=\begin{bmatrix}
    1&1&0
  \end{bmatrix}(1+\sqrt{2})^{j-2}
    \begin{bmatrix}
      1/2\\\sqrt{2}/2\\1/2
    \end{bmatrix}
    +\begin{bmatrix}
    1&1&0
  \end{bmatrix} (1-\sqrt{2})^{j-2} 
    \begin{bmatrix}
      1/2\\ -\sqrt{2}/2 \\1/2
    \end{bmatrix},
  \end{split}
\end{equation*}
so that
\begin{equation}
\label{eq:34}
\Prob\{\Delta_i=j\}
     =(1/3)^{j}
  \frac{(1+\sqrt{2})^{j-1} + (1-\sqrt{2})^{j-1} }{2},\quad j\geq 2.
\end{equation}
Thus (and after some algebraic manipulations),~\eqref{eq:13} yields 
\begin{equation}
  \label{eq:35}
  \begin{split}
    \Omega 
    &= (\alpha/3) +\frac{\alpha}{2} \sum_{j\geq
      2}(1/3)^{j}(1+\sqrt{2})^{j-1} \rho^{j-1} + \frac{\alpha}{2} \sum_{j\geq
      2}(1/3)^{j}(1-\sqrt{2})^{j-1} \rho^{j-1}
    = (\alpha/3)\bar f_2 (\rho).
  \end{split}
\end{equation}
where
\begin{equation}
  \label{eq:41}
  \bar f_2 (\rho)\eq 1 + (\rho/2)\Bigg( 
  \frac{(1+\sqrt{2})(3- \rho(1-\sqrt{2})) + (1-\sqrt{2})(3- \rho(1+\sqrt{2}))}{(3- \rho(1+\sqrt{2}))(3- \rho(1-\sqrt{2}))}\Bigg)=\frac{3(3-\rho)}{9-6\rho-\rho^2}.
\end{equation}
Using Corollary~\ref{c15} and by noting that
\begin{equation}
  \label{eq:50}
  (3-2\rho) \bar{f}_2(\rho)\leq 3 \Longleftrightarrow  \rho(1-\rho) \geq 0,
\end{equation}
we conclude that the upper bound on $\alpha$ permited in Theorem 5 of
\cite{quegup12a} is smaller than the one allowed  in the present work. \hfs    
\end{ex}

\section{Robustness To Process Noise}
\label{sec:notes-rubustness}
Our presentation so far assumed no process noise in (\ref{eq:15}). A natural
question is if the quality of future inputs, and hence the performance of the
algorithm, degrades if process noise is present. We now  consider the case
where the system model is given by
\begin{equation}
\label{eq:51}
x(k+1)=f(x(k),u(k),w(k)),
\end{equation}
where $w(k)\in\R^n$ is a white noise process, assumed independent of the
other random variables in the system. For simplicity we shall assume uniform
continuity and bounds as follows:
\begin{ass}
\label{ass:tool}
 There exist $\lambda_x,\lambda_u,\lambda_w,\lambda_V,\lambda_{\kappa}, \rho,
 \beta, \alpha, \eta\in \R_{\geq 0}$ such that, $\forall x,z,w\in \R^n$ and $\forall  u,v\in \R^p$ the following are satisfied:
\begin{equation}
  \label{eq:24}
  \begin{split}
      |f(x,u,w)-f(z,v,\mathbf{0}_n)|&\leq \lambda_x|x-z|+\lambda_u|u-v|+\lambda_w|w|,\\
    |V(x)-V(z)|&\leq \lambda_V |x-z|,\\
     |\kappa(x)-\kappa(z)|&\leq \lambda_\kappa |x-z|,
  \end{split}
\end{equation}
\begin{equation}
  \label{eq:29}
  \begin{split}
    V(f(x,\kappa(x),w))&\leq \rho V(x) + \beta |w|,\\
    V(f(x,\mathbf{0}_p,w))&\leq \alpha V(x) + \eta |w|.
  \end{split}
\end{equation}
\end{ass}

The following result shows that the condition $\Omega<1$, used in Theorem
\ref{theorem:a1_stability}, plays an important role
also in the present robustness analysis. As in related results on stochastic
stability with unbounded dropouts and disturbances (see, e.g., \cite{quenes12a}), the property established
is weaker than that of  Theorem \ref{theorem:a1_stability}.


\begin{theorem}
  \label{thm:robust}
  Suppose that  
  Assumptions~\ref{ass:CLF}--\ref{ass:tool}   hold, that $\E\{|w(k)|\}<\infty$ and that
$\Omega  <1$. Then the plant state trajectory when controlled with Algorithm
A$_1$ satisfies $\E\{\varphi_1(|x(k)|)\}<\infty$, $ \forall k\in \N_0.$\hfs
\end{theorem}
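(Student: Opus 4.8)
The plan is to mirror the cycle-based argument behind Theorem~\ref{theorem:a1_stability}, but to carry along the process noise using Assumption~\ref{ass:tool}, so that the contraction at the return times $\K$ becomes a \emph{random-time drift inequality with a bounded additive term}. Since $\varphi_1(|x|)\le V(x)$ by~\eqref{eq:3}, it suffices to bound $\E\{V(x(k))\}$ uniformly in $k$.

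First I would record one-step bounds on $V$ along the actual trajectory. At the instants $k\in\K$ the applied input is $\mathbf{0}_p$, so the second inequality in~\eqref{eq:29} gives $V(x(k+1))\le \alpha V(x(k))+\eta|w(k)|$. For $k\notin\K$ the subtlety is that Algorithm A$_1$ applies buffered inputs that were computed, via Step~3, along the \emph{noiseless} predicted trajectory $\hat x$ (reset to the true state at each fresh computation, i.e.\ whenever $N(k)\ge1$), rather than $\kappa$ of the current state. By construction the applied input equals $\kappa(\hat x(k))$ and, along $\hat x$, the contraction in~\eqref{eq:3} yields $V(\hat x(k+1))\le\rho V(\hat x(k))$. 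I would then control the mismatch $e(k)\eq x(k)-\hat x(k)$: it vanishes at the last fresh computation and, since the same input enters both the true and predicted updates, the first bound in~\eqref{eq:24} gives $|e(k+1)|\le\lambda_x|e(k)|+\lambda_w|w(k)|$, whence $|e(k)|$ is dominated by a $\lambda_x$-weighted sum of at most $\Lambda-1$ recent noise magnitudes (the buffer length caps the look-back). Combining this with the Lipschitz bound $|V(x)-V(\hat x)|\le\lambda_V|e|$ from~\eqref{eq:24} gives a controlled-step estimate of the form $V(x(k+1))\le\rho V(x(k))+\lambda_V\big(\rho|e(k)|+|e(k+1)|\big)$.

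Next I would splice these per-step bounds over one return cycle $k_i\to k_{i+1}$. Conditioning on $\Delta_i=j$ and unrolling (one $\alpha$-step followed by $j-1$ $\rho$-steps, exactly as in Lemma~\ref{lemma:anytime_inter}) produces $\E\{V(x(k_{i+1}))\mid x(k_i)=\chi,\Delta_i=j\}\le \alpha\rho^{j-1}V(\chi)+w_j$, where $w_j$ collects the $\rho$-weighted noise contributions accumulated over the cycle. Averaging over $\Delta_i$ using Lemma~\ref{lem:Markov} and the i.i.d.\ property of $\{\Delta_i\}$, and recalling the definition~\eqref{eq:13} of $\Omega$, this becomes the random-time drift inequality $\E\{V(x(k_{i+1}))\mid x(k_i)\}\le \Omega V(x(k_i))+W$. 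The constant $W\eq\sum_j\Prob\{\Delta_i=j\}\,w_j$ is finite because the per-step noise terms have uniformly bounded mean (as $\E\{|w(k)|\}<\infty$) and are geometrically weighted by $\rho$, while $\{\Delta_i\}$, being the first return time of a finite irreducible aperiodic chain, has finite mean. Iterating over cycles and using $\Omega<1$ then yields $\E\{V(x(k_i))\}\le \Omega^{i}\E\{V(x(0))\}+W/(1-\Omega)$, which is bounded uniformly in $i$ since $\E\{V(x(0))\}\le\E\{\varphi_2(|x(0)|)\}<\infty$ by Assumption~\ref{ass:bound_prob}.

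Finally I would pass from the return times to an arbitrary time $k$. Within any cycle the value $V$ can grow by at most the factor $\alpha$ (the single zero-input step) before contracting, so unrolling the per-step bounds gives $\E\{V(x(k_i+\ell))\mid x(k_i)\}\le \alpha V(x(k_i))+W'$ for all $0\le\ell<\Delta_i$, with $W'$ independent of $\ell$ thanks to the $\rho$-geometric weighting of the intermediate noise. The remaining---and I expect principal---difficulty is to conclude a bound that is uniform in $k$ rather than merely finite for each fixed $k$: the cycle containing a fixed time $k$ has a random, length-biased duration, so one must control the expected value $\E\{V(x(k_{I(k)}))\}$ at the \emph{start of the in-progress cycle}, where $I(k)$ is the index of that cycle. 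Here I would exploit that the processor-availability chain $\{N\}_{\N_0}$, and hence each $\Delta_i$, is exogenous and therefore independent of the state value $V(x(k_i))$ produced by the previous cycles; combined with the uniform return-time bound $\E\{V(x(k_i))\}\le \E\{V(x(0))\}+W/(1-\Omega)$ and a renewal (regeneration) argument of the type underlying the random-time drift results used in~\cite{quenes12a}, this gives $\sup_k\E\{V(x(k))\}<\infty$, and hence $\E\{\varphi_1(|x(k)|)\}<\infty$ for every $k\in\N_0$. The crux of the whole argument is the noise bookkeeping in the controlled steps: everything hinges on the prediction error $e(k)$ staying integrable with a cycle-uniform mean, which is precisely what the buffer-length cap together with the Lipschitz constants in~\eqref{eq:24} deliver.
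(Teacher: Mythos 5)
Your proposal follows the same overall strategy as the paper's proof (Appendix~\ref{proof:thm:robust}): extend Lemma~\ref{lemma:anytime_inter} to the perturbed plant so that the return-time contraction becomes $\E\{V(x(k_{1}))\,|\,x(k_0)=\chi\}\le\Omega V(\chi)+\Sigma$ with a finite additive constant (the paper's~\eqref{eq:66b}), iterate this drift at the instants in $\K$ using Lemma~\ref{lem:Markov}, and then bound $V$ at the in-cycle instants. Your noise bookkeeping is a genuinely cleaner (and equivalent) version of the paper's: where the paper enumerates the possible buffered inputs $\kappa(\bar f^{\,l-1}(x(j-l)))$ and bounds each nested composition by explicit products of Lipschitz constants to obtain the uniformly bounded terms $\Psi_j$ in~\eqref{eq:1147}, you run the single recursion $|e(k+1)|\le\lambda_x|e(k)|+\lambda_w|w(k)|$ for the prediction error (valid because the same buffered input drives both the true and predicted dynamics), use that $e$ resets at every fresh computation so the look-back is capped by the buffer length $\Lambda$, and transfer the error to $V$ via $\lambda_V$. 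Both arguments rest on exactly the same two facts --- the Lipschitz bounds in~\eqref{eq:24}--\eqref{eq:29} and the finite buffer cap --- so your paragraphs establishing the drift inequality and the within-cycle bound $\E\{V(x(k_i+\ell))\,|\,x(k_i)\}\le\alpha V(x(k_i))+W'$ are sound and match the paper.

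The soft spot is your last step. You single out as the ``principal difficulty'' obtaining a bound \emph{uniform} in $k$, and you resolve it only by appeal to an unproven renewal/regeneration argument (exogeneity of the chain plus results of the type in~\cite{quenes12a}) yielding $\sup_k\E\{V(x(k))\}<\infty$. That claim is stronger than what Theorem~\ref{thm:robust} asserts, and as written it is not established: controlling $\E\{V(x(k_{I(k)}))\}$ over the length-biased in-progress cycle is precisely the nontrivial content you would need to supply. However, the theorem only claims $\E\{\varphi_1(|x(k)|)\}<\infty$ for each fixed $k$, and this already follows from the bounds you derived, with no renewal theory: since $k_i\ge i$, at most $k+1$ cycles can have started by time $k$, so, using $V\ge 0$ and your uniform return-time bound $\E\{V(x(k_i))\}\le B\eq\E\{V(x(0))\}+\Sigma/(1-\Omega)$,
\begin{equation*}
  \E\{V(x(k))\}=\sum_{i=0}^{k}\E\big\{V(x(k))\,\mathbf{1}\{k_i\le k<k_{i+1}\}\big\}
  \le\sum_{i=0}^{k}\Big(\max(\alpha,1)\,\E\{V(x(k_i))\}+W'\Big)
  \le (k+1)\big(\max(\alpha,1)B+W'\big)<\infty,
\end{equation*}
and then $\E\{\varphi_1(|x(k)|)\}\le\E\{V(x(k))\}<\infty$ by~\eqref{eq:3}. (The paper instead finishes by bounding the expected sum of $V$ over each cycle, exactly as in the proof of Theorem~\ref{theorem:a1_stability}, and using $\bigcup_{i}\{k_i,\dots,k_{i+1}-1\}=\N_0$.) Replace your renewal appeal with this counting argument and the proof is complete.
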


\section{Case Studies}
\label{sec:case-studies}
\begin{figure}[t] 
\centering
\includegraphics[width=.5\textwidth]{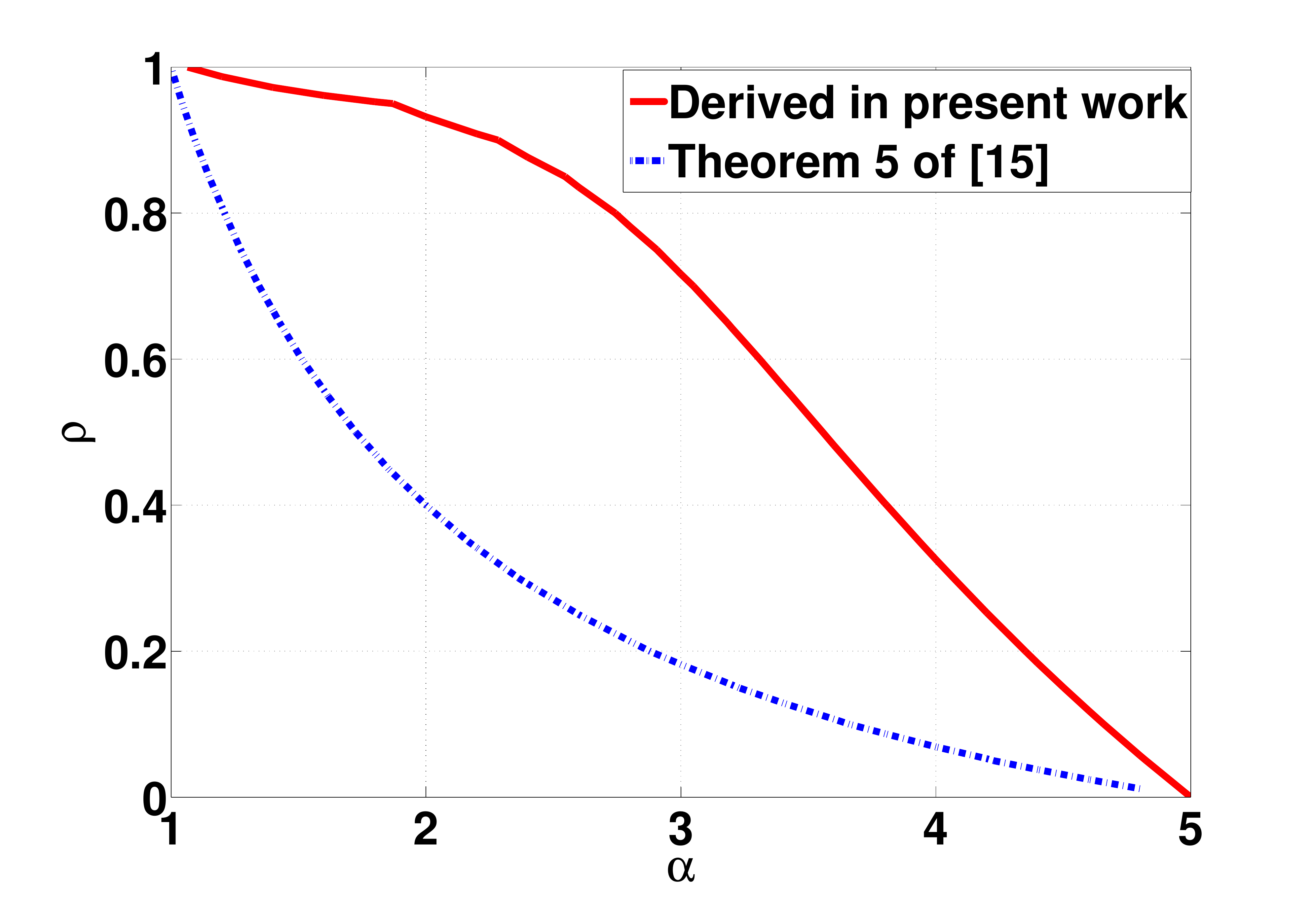}
	\caption{Boundaries of stability regions for the example considered.}
	\label{region}
      \end{figure}

We consider a processor availability model with $\Lambda=5$ and the transition probability matrix $\mathcal{Q}$ with $q_{00}=0.2,~ q_{0j}=0.16,~\forall j\in\N_1^{\Lambda}, ~q_{10}=0.9,~q_{11}=q_{12}=0.05,~q_{21}=0.1,~q_{2j}=0.225,~\forall j\in\N_2^{\Lambda},~q_{ij}=0.25,~\forall i\in\N_3^{\Lambda},~\forall j\in\N_2^{\Lambda}$. All other transition probabilities are identically zero:
\begin{equation*}
\mathcal{Q}=
\begin{bmatrix}
   0.2&0.16&0.16&0.16& 0.16&0.16\\
   0.9&0.05&0.05&0& 0&0\\
   0&0.1&0.225&0.225&0.225&0.225\\
   0&0&0.25&0.25&0.25&0.25\\
   0&0&0.25&0.25& 0.25&0.25\\
   0&0&0.25&0.25&0.25&0.25
\end{bmatrix}.
\end{equation*}
 Intuitively, since the sufficient
condition for stability in Corollary~\ref{c14} is based on a worst case
analysis, whereas the condition in Theorem~\ref{theorem:a1_stability} is not,
the latter result can be expected to be less conservative than the former. This
conjecture was verified in Example~\ref{ex:better}  and is further illustrated  in Fig.~\ref{region} that characterizes the stability region boundaries
in terms of $\alpha$ and $\rho$.  The stable region (area under the curve) as derived from the condition given in Theorem~\ref{theorem:a1_stability} is larger than the one derived from 
Corollary~\ref{c14} (which embodies Theorem 5 of
\cite{quegup12a}).      

 \begin{figure}[t] 
\centering 
	\includegraphics[width=.7\textwidth]{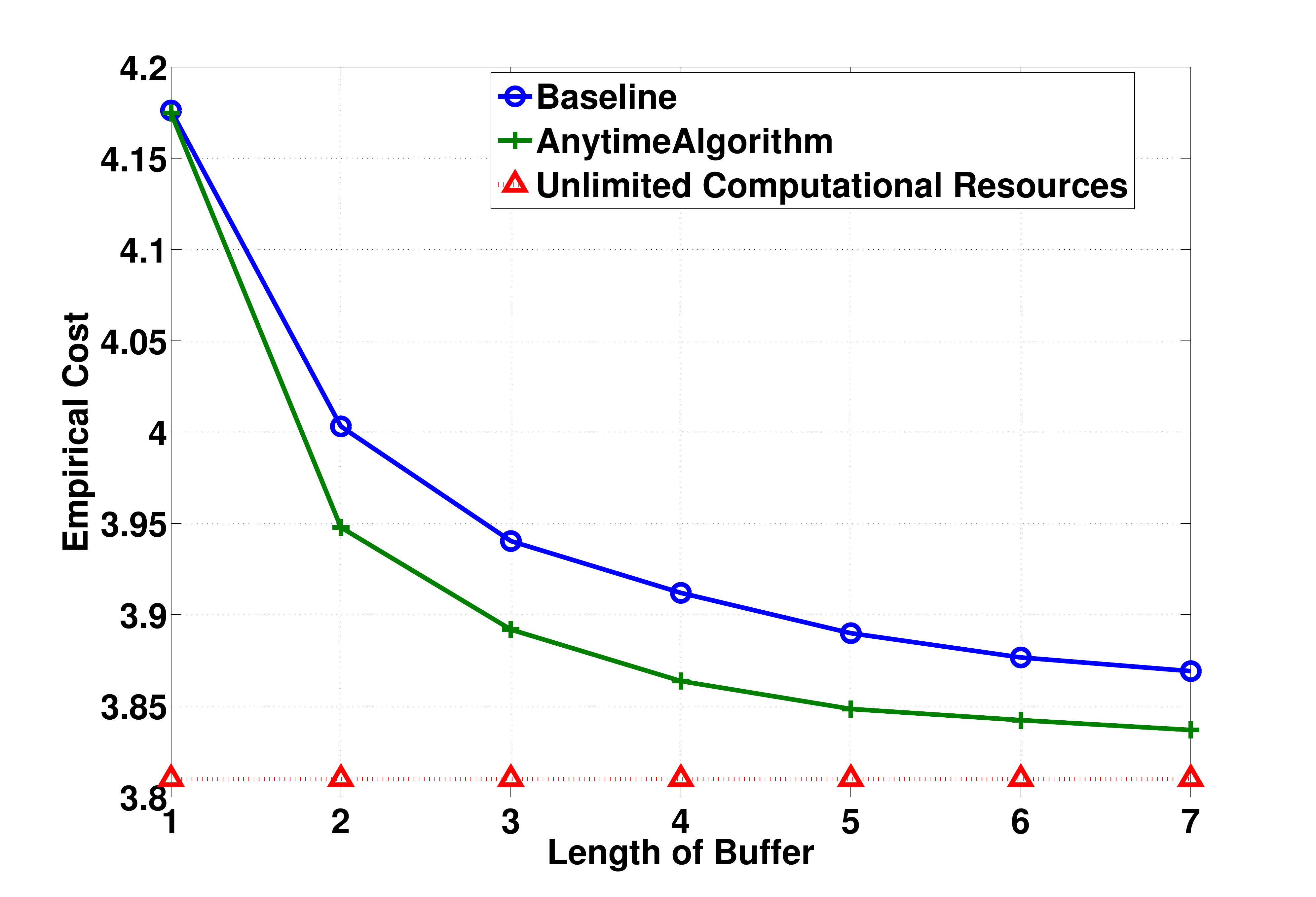}
	\caption{Empirical cost when controlling~(\ref{eq:8}) with the anytime algorithm
  and with the baseline algorithm~(\ref{eq:4c}), as a function of the parameter
  $\Lambda$, see~(\ref{eq:11}). Also included is the ideal case with unlimited
   resources, where $u(k)=\kappa(x(k))$, for all $k\in\N$. }
	\label{comparison_multibit}
\end{figure}

\par Next, consider a specific non-linear plant model of the form~(\ref{eq:15}), where
\begin{equation}
  \label{eq:8}
   x(k+1)=x(k)+0.01((x(k))^3+u(k)).
\end{equation}
A control law satisfying Assumption~\ref{ass:CLF} is given by $\kappa(x)=-x^3-x$,
which is globally stabilizing with $V(x)=|x|$,
$\varphi_1(|x|)=\varphi_2(|x|)=|x|$ and $\rho = 0.99$, see \cite{quegup12a}.
Consider the
following class  of processor availability transition matrices: 
\begin{equation}
\label{eq:11}
 \mathcal{Q}_{\Lambda} =
 \begin{bmatrix}
    0.4&0.6/\Lambda&0.6/\Lambda&\dots\\
    0.6/\Lambda& 0.4&0.6/\Lambda&\dots\\
    0.6/\Lambda&0.6/\Lambda& 0.4&\ddots\\
    \vdots&\vdots&\ddots &\ddots
  \end{bmatrix}\in\R^{(\Lambda+1)\times(\Lambda+1)}.
\end{equation}
In~(\ref{eq:11}), $\Lambda \in\{1,2,\dots,7\}$ is a parameter which determines
the support of $\{N\}_{\N_0}$ and also 
how likely the processor availability changes.
We adopt as  performance measure, the empirical cost
 $$  J=\frac{1}{50}\E\left\{
     \sum_{k=0}^{49}\left(0.2(x(k))^{2}+2(u(k))^{2}
     \right)\right\}, $$
where expectation is taken with respect to the process
$\{N\}_{\N_0}$. Fig.~\ref{comparison_multibit} illustrates the result obtained
when using the anytime algorithm A$_1$ and also the baseline algorithm~(\ref{eq:4c}). The anytime control
algorithm  outperforms the baseline controller
 for  all processor availability models considered. Fig.~\ref{comparison_multibit} also compares the performance with the one without computational
uncertainty, i.e. $N(k)>0,~\forall k$. This comparison characterizes the degradation in performance due to fluctuating CPU time. 

\section{Conclusions}
\label{sec:conclusions}
We  analyzed an anytime control algorithm    when the processor
availability is described by a Markov Chain. The algorithm partially compensates for the effect of  the processor not providing sufficient resources at some time steps. For general non-linear systems, we
used 
stochastic Lyapunov methods to obtain
sufficient conditions for  stability. The results obtained complement those of
our recent article \cite{quegup12a}. In subsequent work, see\cite{quejur13a},  we have
shown how to use  the present analysis methodology  for   networked
control systems with random delays and dropouts.

\appendices
\section{Proof of Lemma~\ref{lem:pij}}
\label{sec:proof-lemma}
If Assumption~\ref{ass:iid} holds,  the definition of  $\mathbb{S}$,~(\ref{eq:19}) and 
the Markovian property of 
$\{N\}_{\N_0}$ yield:
\begin{equation*}
  \begin{split}
  p_{00}&=\Prob\{Z(k+1)=s_0\,|\,Z(k)=s_0\}=\Prob\{N(k+1)=\lambda(k+1)=0\,|\,N(k)=\lambda(k)=0\}\\
  &=\Prob\{N(k+1)=0\,|\,N(k)=0\}=q_{00}\\
  p_{10}&=\Prob\{Z(k+1)=s_0\,|\,Z(k)=s_1\}=\Prob\{N(k+1)=\lambda(k+1)=0\,|\,N(k)=\lambda(k)=1\}\\
  &=\Prob\{N(k+1)=0\,|\,N(k)=1\}=q_{10}\\
  p_{(\Lambda+1)0}&=\Prob\{Z(k+1)=s_0\,|\,Z(k)=s_{\Lambda+1}\}=\Prob\{N(k+1)=\lambda(k+1)=0\,|\,N(k)=0,\lambda(k) =1\}\\
  &=\Prob\{N(k+1)=0\,|\,N(k)=0\}=q_{00}.
\end{split}
\end{equation*}
Similarly, for all $(i,j)    \in\N_0^\Lambda\times \N_1^\Lambda$, one has
$$p_{ij}=\Prob\{Z(k+1)=s_j\,|\,Z(k)=s_i\}
     =\Prob\{N(k+1)=j\,|\,N(k)=i\}=q_{ij},$$
whereas, for all $j \in \N_1^{\Lambda-1}$, the transition probabilities satisfy
\begin{equation*}
   \begin{split}
p_{(j+1)(\Lambda+j)}&=\Prob\{Z(k+1)=s_{\Lambda
  +j}\,|\,Z(k)=s_{j+1}\}=\Prob\{Z(k+1)=(0,j)\,|\,Z(k)=(j+1,j+1)\}\\
&=\Prob\{N(k+1)=0\,|\,N(k)=j+1\}=q_{(j+1)0}.
\end{split}
\end{equation*}
Direct calculations also yield that for all $m\in\N_{2}^{\Lambda-1}$,
\begin{equation*}
   \begin{split}
p&_{(\Lambda+m)(\Lambda+m-1)}=
 \Prob\{Z(k+1)=s_{\Lambda+m-1}\,|\,Z(k)=s_{\Lambda+m}\}\\&=\Prob\{Z(k+1)=(0,m-1)\,|\,Z(k)=(0,m)\}=\Prob\{N(k+1)=0\,|\,N(k)=0\}=q_{00},
\end{split}
\end{equation*}
and, for all $(k,l)    \in\N_1^{\Lambda-1}\times \N_1^\Lambda$, it holds that
$p_{(\Lambda+k)l}=\Prob\{Z(k+1)=s_l\,|\,Z(k)=s_{\Lambda+k}\}=\Prob\{Z(k+1)=(l,l)\,|\,Z(k)=(0,k)\}
     =\Prob\{N(k+1)=l\,|\,N(k)=0\}=q_{0l}.$
Due to~(\ref{eq:19}), the other transitions will never  occur. 

\section{Proof of Lemma~\ref{lem:L2}}
\label{sec:proof-lemma-1}
The case $j=1$ is immediate, since
 $      \Prob\{\Delta_i=1\}=\Prob\{Z(k+1)=(0,0)\,|\,Z(k)=(0,0)\}
      =\Prob\{N(k+1)=0\,|\,N(k)=0\}=q_{00}$.
For $j\geq 2$, we proceed
  as follows: For all $s_i\in\mathbb{S}$, $i\not =0$,  denote by $\nu_i$  the
  first passage time of the state 
  $s_i$ to $s_0$. Thus, $\nu_i$ are random
  variables, with $\nu_i=j$ if the state $s_0$ is entered from $s_i$ for the
  first time in $j$ steps. Since only the states $s_{1}$ and
  $s_{\Lambda+1}$ can reach $s_0$ in one step, using~\eqref{eq:28},
  \begin{equation}
    \label{eq:25}
       \Prob\{\nu_i = 1\}= p_{i0}=
     \begin{cases}
      q_{10},&\text{if $i=1$}\\
      q_{00},&\text{if $i=\Lambda+1$}\\
      0,&\text{if $i\in\N_{2}^\Lambda \cup \N_{\Lambda +2}^{2\Lambda-1}$.}
    \end{cases}
\end{equation}  
For $j\geq 2$, paths from $s_i$ to $s_0$ go through intermediate states $s_\ell\not = s_0$,
providing the recursions
 $$ \Prob\{\nu_i = j\} =\sum_{\ell =1}^{2\Lambda-1} p_{i \ell } \Prob\{\nu_\ell =
 j-1\}, \quad \forall i\in\N_1^{2\Lambda-1},$$
which can be stated in matrix form via:
  \begin{equation*}
    \begin{bmatrix}
      \Prob\{\nu_1 = j\}\\
      \vdots\\
      \Prob\{\nu_{2\Lambda-1} = j\}
    \end{bmatrix}
  = \overline{\mathcal{P}}
    \begin{bmatrix}
      \Prob\{\nu_1 = j-1\}\\
      \vdots\\
      \Prob\{\nu_{2\Lambda-1} = j-1\}
    \end{bmatrix}
      =\overline{\mathcal{P}}^{j-1} \begin{bmatrix}
      \Prob\{\nu_1 = 1\}\\
      \vdots\\
      \Prob\{\nu_{2\Lambda-1} = 1\} 
    \end{bmatrix}= \overline{\mathcal{P}}^{j-1} \mu,
  \end{equation*}
which in view of~(\ref{eq:25}) and~(\ref{eq:28}), holds not only for $j\geq 2$ ,
but also 
for $j=1$.  The result now follows by using~(\ref{eq:25}) and the distribution
of $\{\Delta_i\}$. The latter  can be obtained from the distribution of $\nu_i$
  by considering the transitions 
   from $s_0$ to nodes other than itself (see~\eqref{eq:28}):
$$  \Prob\{\Delta_i=j\}=\sum_{\ell =1}^{2\Lambda-1} p_{0\ell} \Prob\{\nu_\ell =
    j-1\}=\sum_{\ell
      =1}^{\Lambda} q_{0\ell} \Prob\{\nu_\ell = j-1\}.$$

\section{Proof of Corollary~\ref{c14}}
\label{sec:proof-corollary}
For the situation of interest, the term
$\Upsilon_\varsigma$ introduced in Lemma 4 of\cite{quegup12a} can be written as:
\begin{equation}
  \begin{split}
    \label{eq:27}
    \Upsilon_\varsigma &=\bar q_\varsigma\big(I-\rho\bar{Q}\big)^{-1}\,  \bigg(    \rho I    + (\alpha-\rho)\big(I-\alpha\bar{Q}\big)^{-1}\sum_{l=1}^\Lambda    p_{l|\varsigma}(\rho\bar{Q})^l\bigg) \bar p\\
  \end{split}
\end{equation}
for all $\varsigma \in  \{2,3,\dots,\Lambda +1\}$
and where
\begin{equation*}
  \begin{split}
    \bar{q}_{\varsigma} &=
    \begin{bmatrix}
      q_{(\varsigma-1)0} & q_{(\varsigma-1)1} &\dots & q_{(\varsigma-1)\Lambda}
    \end{bmatrix},\quad
\bar Q =
      \begin{bmatrix}
        q_{00} & q_{01} &\dots & q_{0\Lambda}\\
        0 & 0 & \dots & 0\\
        \vdots & \vdots & \ddots & \vdots\\
         0 & 0 & \dots & 0
      \end{bmatrix}\!,\; \bar p =
    \begin{bmatrix}
      0 \\ 1 \\ \vdots \\ 1
    \end{bmatrix}
  \end{split}
\end{equation*}
Direct calculations lead to $\bar Q^l=(q_{00})^{l-1} \bar Q$, for all $l\in
\N$ and~\eqref{eq:27} is then condensed into
\begin{equation*}
  \begin{split}
    \Upsilon_\varsigma &=\bar q_\varsigma\big(I-\rho\bar{Q}\big)^{-1}   \Big( \rho I
    + \rho^{\varsigma-1}(q_{00})^{\varsigma-2} (\alpha-\rho)\big(I-\alpha\bar{Q}\big)^{-1} 
     \bar{Q}\Big) \bar p\\
     &=\frac{1}{1-q_{00}\rho}\Bigg(\frac{(\alpha-\rho)}{1-q_{00}\alpha}q_{00}^{\varsigma-2}\rho^{\varsigma-1}+\rho^{2}\Bigg)\sum_{l=1}^\Lambda
     q_{(\varsigma-1)0}q_{0l} +\rho \sum_{l=1}^\Lambda q_{(\varsigma-1)l},
  \end{split}
\end{equation*}
which proves the result.
  
\section{Proof of Theorem~\ref{thm:robust}}
\label{proof:thm:robust}
To establish this result, we first extend Lemma~\ref{lemma:anytime_inter} to the
perturbed plant case~\eqref{eq:51}. Clearly, for $\Delta_0\geq 1$
(and setting $k_0=0$, $x(0)=x$, and using notation $\E_x\{ \cdot\} = \E\{\cdot
\,|\, x(0)=x\}$), we have $u(0)=\mathbf{0}_p$. Thus,
\begin{equation*}
  \label{eq:37}
  V(x(1)) \leq \alpha V(x) +\eta |w(0)|,\quad \forall \Delta_0\geq 1
\end{equation*}
so that
\begin{equation*}
  \label{eq:30}
  \E_x \{V(x(1))\,|\,\Delta_0\geq 1\}\leq \alpha V(x) + W\Psi_1(0),  
\end{equation*}
where $W\eq\E\{|w(k)|\}$ and $\Psi_1(0)=\eta$. Now for $\Delta_0\geq 2$, thus $u(1)=\kappa((x(1))$,
using the above we obtain
\begin{equation}
  \label{eq:36}
  V(x(2)) \leq  \rho V(x(1)) +\beta |w(1)| \leq \rho \alpha V(x(0)) +\rho \eta
  |w(0)| +\beta |w(1)| , \quad \forall \Delta_0\geq 2
\end{equation}
yielding
\begin{equation*}
  \label{eq:38}
  \E_x\{V(x(2))\,|\, \Delta_0\geq 2 \}\leq \alpha \rho  V(x) +W\Psi_2(1),  
\end{equation*}
with $\Psi_2(1)=\rho \eta +\beta$.

\par For $\Delta_0\geq 3$ analyzing $V(x(3))$ becomes more involved since $u(2)$
could have been calculated using $x(1)$ or $x(2)$:
\begin{equation}
  \label{eq:40}
  u(2)\in\{\kappa(x(2)), \kappa (\bar f (x(1))) \}
\end{equation}
where
\begin{equation}
  \label{eq:49}
  \bar f (x )\eq f(x ,\kappa(x ), \mathbf{0}_n).
\end{equation}
For notational convenience, we let $\bar f^0(x)\eq x$ and define the result of
two repeated iterations of  ({\ref{eq:49}}) as $\bar f ^2(x )\eq f(\bar f (x )
,\kappa(\bar f (x ) ), \mathbf{0}_n)$, and $\bar f ^k(x )$ as the result of $k$
repeated iterations of   ({\ref{eq:49}}). Interestingly, due to
continuity, both cases in~\eqref{eq:40} are 
not that far away from $\kappa(x(2))$. In fact,
\begin{equation*}
  \label{eq:43}
  \begin{split}
   |u(2) - \kappa(x(2)) | &\leq |\kappa (\bar f (x(1))) - \kappa(x(2)) |
    \leq  \lambda_\kappa| \bar f (x(1)) - x(2)|\\
    &=  \lambda_\kappa |  f(x(1),\kappa(x(1)),\mathbf{0}_n) -
    f(x(1),\kappa(x(1)),w(1))| \leq   \lambda_\kappa \lambda_w|w(1)|.
  \end{split}
\end{equation*}
Thus,
\begin{equation*}
  \label{eq:44}
  \begin{split}
    &|f(x(2),u(2),w(2))-f(x(2),\kappa(x(2)),w(2))| \\
    &\leq |f(x(2),\kappa (\bar f
    (x(1))),w(2))-f(x(2),\kappa(x(2)),w(2))|\leq \lambda_u\lambda_\kappa
    \lambda_w|w(1)|
  \end{split}
  \end{equation*}
which using~\eqref{eq:36} gives
\begin{equation}
  \label{eq:46}
  \begin{split}
    V(x(3))&= V(f(x(2),\kappa(x(2)),w(2)))+V(x(3))- V(f(x(2),\kappa(x(2)),w(2)) )\\
    &\leq \rho V(x(2)) +\beta |w(2)| + \lambda_V \lambda_u\lambda_\kappa
    \lambda_w|w(1)|\\
    &\leq \alpha \rho^2  V(x(0)) + \rho^2\eta |w(0)|
 + \rho\beta  |w(1)|+\beta |w(2)| + \lambda_V \lambda_u\lambda_\kappa
    \lambda_w|w(1)|, \quad \forall \Delta_0\geq 3
  \end{split}
\end{equation}
and
\begin{equation*}
   \E_x\{ V(x(3))\,|\, \Delta_0\geq 3\}\leq  \alpha  \rho^2  V(x ) + W \Psi_3(2)
\end{equation*} 
with
\begin{equation*}
  \Psi_3(j) = \sum_{\ell=0}^j \psi_{3,\ell}\rho^\ell,\quad 
   \psi_{3,0}  =\beta
   + \lambda_V \lambda_u\lambda_\kappa \lambda_w,\; \psi_{3,1} =\beta, \; \psi_{3,2} =\eta,
\end{equation*}

\par 
To extend the above analyis to   $V(x(4))$ for $\Delta_0\geq 4$, simply note that (for $\Lambda \geq 3$), 
\begin{equation*}
  u(3)\in\big\{\kappa(x(3)),\kappa(\bar f(x(2))), \kappa \big(\bar
  f (\bar f (x(1)))\big) \big\},
\end{equation*}
thus,
\begin{equation*}
  |u(3) - \kappa(x(3)) |\leq \max\big\{ |\kappa (\bar f(x(2))) - \kappa(x(3)) | ,|\kappa (\bar f^2 (x(1))) - \kappa(x(3)) |\big\}
\end{equation*}

By examining both cases separately, an upper bound can be obtained. For example,
for $u(3)=\kappa (\bar f^2 (x(1)))$, we have
\begin{equation*}
  \begin{split}
    |\kappa (\bar f^2 (x(1))) - \kappa(x(3)) |
    &\leq  \lambda_\kappa| \bar f^2 (x(1)) - x(3)|\\
    &\leq  \lambda_\kappa |  f(\bar{f} (x(1)),\kappa(\bar{f} (x(1)),\mathbf{0}_n) -
    f(x(2),\kappa(x(2)),w(2))|\\
    &\leq   \lambda_\kappa \bigg{[}(\lambda_x+\lambda_u\lambda_{\kappa})|\bar{f} (x(1)-x(2)|+ \lambda_w|w(2)|\bigg{]}\\
    &\leq  \lambda_\kappa(\lambda_x+\lambda_u\lambda_{\kappa})\lambda_w|w(1)|+\lambda_{\kappa}\lambda_w|w(2)|.
  \end{split}
\end{equation*}
leading to
\begin{equation*}
  \begin{split}
    &|f(x(3),u(3),w(3))-f(x(3),\kappa(x(3)),w(3))|\\
     &\leq |f(x(3),\kappa (\bar f^2
    (x(1))),w(3))-f(x(3),\kappa(x(3)),w(3))|\\
    &\leq \lambda_u\bigg{[}\lambda_\kappa(\lambda_x+\lambda_u\lambda_{\kappa})\lambda_w|w(1)|+\lambda_{\kappa}\lambda_w|w(2)|\bigg{]}
  \end{split}
  \end{equation*}
which gives (using~\eqref{eq:46})
\begin{equation*}
  \begin{split}
    V(x(4))&= V(f(x(3),\kappa(x(3)),w(3)))+V(x(4))- V(f(x(3),\kappa(x(3)),w(3)) )\\
    &\leq \rho V(x(3)) +\beta |w(3)| + \lambda_V \lambda_u\bigg{[}\lambda_\kappa(\lambda_x+\lambda_u\lambda_{\kappa})\lambda_w|w(1)|+\lambda_{\kappa}\lambda_w|w(2)|\bigg{]}\\
    &\leq \alpha \rho^3  V(x(0)) + \rho^3\eta |w(0)|
 + \bigg(\rho^2\beta+\rho\lambda_V\lambda_u\lambda_\kappa\lambda_w+\lambda_V \lambda_u\lambda_\kappa(\lambda_x+\lambda_u\lambda_{\kappa})\lambda_w\bigg)|w(1)|\\
 &+(\rho\beta+\lambda_V\lambda_u\lambda_\kappa\lambda_w)|w(2)| +\beta|w(3)|, \quad \forall \Delta_0\geq 4.
  \end{split}
\end{equation*}

For the case $u(3)=\kappa (\bar f(x(2)))$, a similar expression can be obtained,
leading to a common upper-bound of the form:
\begin{equation*}
   \E_x\{ V(x(4))\,|\, \Delta_0\geq 4\} \leq \alpha \rho^3  V(x(0)) + W \Psi_4(3), 
\end{equation*}
where  $\Psi_4(j)=\sum_{\ell =0}^{j}\psi_{4,\ell}\rho^\ell$, $j\leq 3$. 

To continue the analysis presented above,
for $\Delta_0\geq j,~j\geq 5$ (for $\Lambda \geq j-1$),  we note that
\begin{equation*}
  u(j-1)\in\big\{\kappa(x(j-1)),\kappa(\bar f(x(j-2))),..., \kappa \big(\bar
  f^{l-1} (x(j-l))\big),~l=1,...,j-1 \big\},
\end{equation*}
Following similar ideas,  one obtains
\begin{equation}
\label{eq:1147}
   \E_x\{ V(x(j))\,|\, \Delta_0\geq j\} \leq \alpha \rho^{j-1}  V(x(0)) + W \Psi_j(j-1), 
\end{equation}
where  $\Psi_j(j-1)=\sum_{\ell =0}^{j-1}\psi_{j,\ell}\rho^\ell$.
Notice that, since the buffer length $\Lambda$ is bounded,  the terms
$\psi_{j,\ell},~\forall j\in \N,~
\ell\in\N_{0}^{j-1}$ are bounded.
 
The above analysis allows one to  generalize Lemma \ref{lemma:anytime_inter} to
the case with i.i.d. disturbances.  The law of total expectation, the fact that
$\{\Delta_i\}$ is i.i.d., and expression~(\ref{eq:1147}) give
\begin{equation}
    \label{eq:66b}
    \E\big\{V(x({k_{1}}))\,\big|\,x(k_{0})=\chi\big\}\\
 \leq \Omega V(\chi)+\Sigma,\quad 
 \forall \chi\in \R^n,
\end{equation}
where\footnote{Note that, since $W$ is assumed bounded, $\Psi_j(j-1)$,
  $j\in\N$  are
uniformly bounded and, thus, $\Sigma$ is bounded.} $$\Sigma\eq \sum_{j\in\N}\Prob\{\Delta_i=j\} W \Psi_j(j-1).$$  
 
From~(\ref{eq:3}) and~(\ref{eq:66b}) and since  Lemma~\ref{lem:Markov} holds
also in the perturbed case, it
follows that if $\Omega  <1$, then for all
    $(i,\chi_0)\in\N_0\times\R^n$,
  $$  \E\{V(x({k_{i}}))\,|\,x(k_{0})=\chi_0\}
    \leq\Omega^{i}V(\chi_0)+\big(\Omega^{i-1}+\Omega^{i-2}+...+\Omega+1\big)\Sigma.$$
    For the time steps $k\in \N \,\backslash \, \mathcal{K}$, i.e., where
   calculated control values are applied,~(\ref{eq:1147}) and the law of total
   expectation yield
  \begin{equation*}
 \E\Bigg\{\sum_{k=k_{i}}^{k_{i+1}-1}V(x({k}))\,\bigg|\,x(k_{i})=\chi_i\Bigg\}  \leq \bigg(1
 +\frac{\alpha}{1-\rho}\bigg)V(\chi_i)+W',
\end{equation*}
 where
 \begin{equation*}
   W'\eq
   \sum_{j\geq 2}\Prob\{\Delta_i=j\}W \Psi_j(j-2)  <\infty,
 \end{equation*}
with $\Psi_j(j-2)=\sum_{\ell =0}^{j-2}\psi_{j,\ell}\rho^\ell$.
Taking conditional expectation $\E\{\, \cdot\,|\, x(k_0)=\chi_0\}$ on both
sides, defining 
$\beta \eq  (1+\alpha-\rho)/(1-\rho)$  and using the Markovian property of $\{x\}_{\K}$ yields
  \begin{equation*}
\begin{split}
 & \E\Bigg\{ \E\Bigg\{\sum_{k=k_{i}}^{k_{i+1}-1}V(x({k}))\,\bigg|\, x(k_{i})=\chi_i\Bigg\}
    \,\Bigg|\, x(k_0)=\chi_0\Bigg\} 
    = \E\Bigg\{\sum_{k=k_{i}}^{k_{i+1}-1}V(x({k}))\,\bigg|\,
    x(k_{0})=\chi_0\Bigg\}\\
    &\leq   \beta
    \E\big\{V(x({k_{i}}))\,\big|\,x(k_0)=\chi_0\big\}+ W' \\
    &\leq \beta
    \Omega^i V(\chi_0)+\beta\bigg(\Omega^{i-1}+\Omega^{i-2}+...+\Omega+1\bigg)\Sigma
    +W' \end{split}
\end{equation*}
Thus,
\begin{equation*}
  \E\Big\{V(x({k}))\,\big|\,
    x(k_{0})=\chi_0\Big\} \leq \beta
    \Omega
    V(\chi_0)+\beta\bigg(\sum_{\ell=0}^{i-1}\Omega^{\ell}\bigg)\Sigma+W'  ,\quad \forall
    k\in\{k_i,k_i+1\dots,k_{i+1}-1\}. 
\end{equation*}
 
Now recall that $k_0=0$ and that
\begin{equation*}
  \bigcup_{i\in\N_0} \{k_i,k_i+1\dots,k_{i+1}-1\} = \N_0.
\end{equation*}
The result now follows by using~(\ref{eq:3}), Assumption~\ref{ass:bound_prob}
and taking 
expectation with respect to the distribution of $x(0)$.

\end{document}